 \newtheorem{thm}{Theorem}[section]
 \newtheorem{cor}[thm]{Corollary}
 \newtheorem{lem}[thm]{Lemma}
 \newtheorem{prop}[thm]{Proposition}
 \theoremstyle{definition}
 \newtheorem{defn}[thm]{Definition}
 \newtheorem{rem}[thm]{Remark}
\newtheorem{ex}[thm]{Example}
 \numberwithin{equation}{section}
\DeclareMathOperator{\Ima}{Im}
\DeclareMathOperator{\Hom}{Hom}
\DeclareMathOperator{\coker}{Coker}
\DeclareMathOperator{\sign}{sgn}
\DeclareMathOperator{\spn}{span}
\begin{document}

\title{On the free LAnKe on $3n-2$ generators: a theorem of Friedmann, Hanlon, Stanley and Wachs}

%----------Author 1

\author[Maliakas]{Mihalis Maliakas}

\address{%
	Department of Mathematics\\
	University of Athens\\
	Greece}
\email{mmaliak@math.uoa.gr}

\author[Stergiopoulou]{Dimitra-Dionysia Stergiopoulou}
\address{Department of Mathematics\\
	University of Athens\\
	Greece}
\email{dstergiop@math.uoa.gr}

\subjclass{05E10, 20C30, 20G05}

\keywords{Specht module, symmetric group, LAnKe, n-ary algebra, Filippov algebra}

\date{December 08, 2025}

%%% ----------------------------------------------------------------------

\begin{abstract}A LAnKe (also known as a Filippov algebra or a Lie algebra of the $n$-th kind) is a vector space equipped with a skew-symmetric $n$-linear form that satisfies the generalized Jacobi identity. Friedmann, Hanlon, Stanley and Wachs have shown that the symmetric group acts on the multilinear part of the free LAnKe on $2n-1$ generators as an irreducible representation. They announced that the multilinear component on $3n-2$ generators decomposes as a direct sum of two irreducible symmetric group representations and a proof was given recently in a subsequent paper by Friedmann, Hanlon and Wachs. In the present paper we provide a proof of the later statement. The two proofs are substantially different.
\end{abstract}

%%% ----------------------------------------------------------------------
\maketitle
%%% ----------------------------------------------------------------------
%\tableofcontents

\section{Introduction} Throughout this paper we work over a field $\mathbb{K}$ of characteristic zero. 

Since the mid 1980's various $n$-ary generalizations of Lie algebras have been introduced and studied. We refer to the Introduction of the paper by Friedmann, Hanlon, Stanley and Wachs \cite{FHSW} for a discussion of such generalizations, historical background and ties with various areas of mathematics and physics. Also there is an extensive review by de Azcárraga and  Izquierdo \cite{AI}. Among the above generalizations are the Filippov algebras \cite{Fi}, which are also called LAnKes \cite{FHSW}.
\begin{defn}[{\cite[Definition 1.2]{FHSW}}]\label{lanke} A \textit{Lie algebra of the $n$-th kind} (\textit{LAnKe or Filippov algebra}) is a $\mathbb{K}$-vector space $\mathcal{L}$ equipped with an $n$-linear bracket $ [-,-, \dots, -] :  \mathcal{L}^n \to \mathcal{L}$ such that for all $x_1,\dots, x_n, y_1, \dots, y_{n-1} \in \mathcal{L}$,
	\begin{enumerate}
		\item $[x_1, x_2, \dots, x_n]=\sign(\sigma)[x_{\sigma(1)}, x_{\sigma(2)}, \dots, x_{\sigma(n)}$] for every $\sigma \in \mathfrak{S}_n$, and
		\item the following generalized Jacobi identity holds\begin{align}\label{GJI}
			&[[x_1,x_2,\dots,x_n],y_1,\dots, y_{n-1}]\\\nonumber&
			=\sum_{i=1}^{n}[x_1,x_2,\dots,x_{i-1},[x_i,y_1,\dots, y_{n-1}],x_{i+1},\dots, x_n].
		\end{align}
	\end{enumerate}
\end{defn}

Homomorphisms between LAnKes are defined in the usual way. Following \cite[Definition 2.1]{FHSW}, the
\textit{free} LAnKe on a set $X$ is a LAnKe $\mathcal{L}$ together with a map $i:X \to \mathcal{L}$ such that if $f:X \to \mathcal{L'}$ is a map, where $\mathcal{L'}$ is a LAnKe, then there is a unique LAnKe homomorphism $F: \mathcal{L} \to \mathcal{L'}$ such that $f=F \circ i$. By a standard argument, free LAnKes on $X$ are isomorphic. It is clear that the free LAnKe for $n=2$ is the free Lie algebra.

In \cite{FHSW}, Friedmann, Hanlon, Stanley and Wachs initiated the study of the action of the symmetric group $\mathfrak{S}_m$ on the multilinear component of the free LAnKe. To be precise, the \textit{multilinear component} $\text{Lie}_n(m)$ of the free LAnKe on $[m]:=\{1, \dots, m\}$ is spanned by the bracketed words on $[m]$ in which each $i$ appears exactly once. It follows that each such bracketed word has the same number of brackets, say $k$, and $m=(n-1)k+1$. Consider the action of $\mathfrak{S}_{m}$ on $\text{Lie}_n(m)$ given by replacing $i$ by $\sigma(i)$ in each bracketed word. Let us denote the corresponding representation of $\mathfrak{S}_m$ by $\rho_{n,k}$. 

For a partition $\lambda$ of $m$, let $S^\lambda$ be the corresponding Specht module of the symmetric group $\mathfrak{S}_m$. As $\lambda$ ranges over all partitions of $m$, the modules $S^\lambda$ form a complete set of inequivalent representations of $\mathfrak{S}_m$. It was shown by Friedmann, Hanlon, Stanley and Wachs in \cite[Theorem 1.3]{FHSW} that the representation $\rho_{n,2}$  of $\mathfrak{S}_{2n-1}$ is isomorphic to the Specht module $S^{(2^{n-1},1)}$ if $n \ge 2$.  The following result was announced in \cite{FHSW} and \cite{FHSW0}. A proof appeared in the recent paper \cite{FHW1}. 
\begin{thm}[{\cite[Theorem 3]{FHSW0}, \cite[p.4]{FHSW}, \cite[Theorem 1.3]{FHW1}}]\label{mainspecht} The representation $\rho_{n,3}$ of $\mathfrak{S}_{3n-2}$ is isomorphic to the direct sum $S^{(3^{n-2},2,1^2)} \oplus S^{(3^{n-1},1)}$ for every $n \ge 2$.
\end{thm}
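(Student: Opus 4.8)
Write $m=3n-2$. The plan is to present $\rho_{n,3}$ as a quotient of an explicit induced module by the relations coming from the generalized Jacobi identity (GJI), to use the representation theory of that module to pin down which constituents can survive, and then to confirm that both of the asserted Specht modules do survive. \textbf{Step 1 (presentation).} A bracketed word on $[m]$ with three brackets has one of two nesting shapes: the \emph{caterpillar}
$$w(A\mid B\mid C):=[[[a_1,\dots,a_n],\,b_1,\dots,b_{n-1}],\,c_1,\dots,c_{n-1}]$$
indexed by an ordered set partition $[m]=A\sqcup B\sqcup C$ with $|A|=n$, $|B|=|C|=n-1$ (antisymmetric in each block), and the \emph{two-branch} word $v=[[a_1,\dots,a_n],[b_1,\dots,b_n],c_1,\dots,c_{n-2}]$. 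Applying the GJI to the outermost bracket of a two-branch word rewrites it as a $\mathbb{Z}$-combination of caterpillars, so the $w(A\mid B\mid C)$ span $\rho_{n,3}$ and give a surjection of $\mathfrak{S}_m$-modules
$$\Phi\colon N:=\mathrm{Ind}_{\mathfrak{S}_n\times\mathfrak{S}_{n-1}\times\mathfrak{S}_{n-1}}^{\mathfrak{S}_m}\bigl(\mathrm{sgn}\boxtimes\mathrm{sgn}\boxtimes\mathrm{sgn}\bigr)\longrightarrow\rho_{n,3}.$$
Listing the GJI instances visible inside a caterpillar and inside a two-branch word, and using the two-branch ones to eliminate the $v$'s, one finds that $\ker\Phi$ is generated by two families of relations among caterpillars: the ``middle-bracket'' relation $\mathrm{(R1)}\colon\ w(A\mid B\mid C)=\sum_{a\in A}\pm\,w\bigl(B\cup\{a\}\mid A\setminus\{a\}\mid C\bigr)$, and the ``outer-bracket'' relation
$$\mathrm{(R2)}\colon\quad w(A\mid B\mid C)=w(A\mid C\mid B)+\sum_{b\in B}\sum_{a\in A}\pm\,w\bigl(C\cup\{b\}\mid (B\setminus\{b\})\cup\{a\}\mid A\setminus\{a\}\bigr).$$

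\textbf{Step 2 (the ambient module).} Since $N\cong M^{(n,n-1,n-1)}\otimes\mathrm{sgn}$ is the signed permutation module attached to $\lambda:=(3^{n-1},1)$ (whose conjugate is $(n,n-1,n-1)$), its irreducible constituents are exactly the $S^\nu$ with $\nu\trianglelefteq\lambda$, each with multiplicity the Kostka number $K_{\nu',\,(n,n-1,n-1)}$. A short computation shows that $S^{(3^{n-1},1)}$ occurs once and $S^{(3^{n-2},2,1^2)}$ occurs twice in $N$, so the target must be obtained from $N$ by discarding all constituents strictly below $(3^{n-2},2,1^2)$ in dominance, discarding the intermediate ones -- notably $S^{(3^{n-2},2,2)}$, which lies just below $\lambda$ -- and removing one of the two copies of $S^{(3^{n-2},2,1^2)}$.

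\textbf{Step 3 (the upper bound, the main point).} Applying the theorem of Friedmann--Hanlon--Stanley--Wachs with the block $C$ held fixed, the relations $\mathrm{(R1)}$ generate exactly the Garnir relations for columns $1$ and $2$ of $\lambda$; hence $N/\langle\mathrm{R1}\rangle$ is the ``columns $1,2$ straightened'' quotient $\widetilde{S}$ of the column-tabloid module, whose constituents and multiplicities can be read off from its basis of fillings that are column-increasing throughout and row-increasing in columns $1,2$. The heart of the matter is to show that adjoining $\mathrm{(R2)}$ annihilates every constituent of $\widetilde{S}$ except $S^{(3^{n-1},1)}$ and one copy of $S^{(3^{n-2},2,1^2)}$. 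I would do this by producing, for each unwanted $\nu$, a nonzero highest-weight vector of its isotypic component (viewing everything through Schur--Weyl duality so that ``highest weight vector'' makes sense) that lies in $\langle\mathrm{R2}\rangle$, together with a dimension bookkeeping showing that exactly one of the two copies of $S^{(3^{n-2},2,1^2)}$ disappears. This is the step I expect to be the main obstacle: $\mathrm{(R2)}$ is the output of an iterated application of the GJI, so the sign tracking and the verification that these combinations vanish on precisely the intended constituents are delicate, with $S^{(3^{n-2},2,2)}$ the sharpest test.

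\textbf{Step 4 (the lower bound and conclusion).} Finally one checks that both summands really occur. For $S^{(3^{n-1},1)}$: take $C$ to be the last $n-1$ letters; the assignment $u(A\mid B)\mapsto w(A\mid B\mid C)$ respects the $k=2$ GJI relation, so it induces an $\mathfrak{S}_{2n-1}$-map from $\rho_{n,2}\cong S^{(2^{n-1},1)}$ into $\rho_{n,3}$, and inducing up -- together with the fact that $S^{(3^{n-1},1)}$ appears with multiplicity one in $\mathrm{Ind}_{\mathfrak{S}_{2n-1}\times\mathfrak{S}_{n-1}}^{\mathfrak{S}_m}\bigl(S^{(2^{n-1},1)}\boxtimes\mathrm{sgn}\bigr)$ -- shows it survives in $\rho_{n,3}$. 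For $S^{(3^{n-2},2,1^2)}$: exhibit an explicit element of $\rho_{n,3}$, built from a caterpillar word with a suitable standard filling, on which the canonical projection $N\to S^{(3^{n-2},2,1^2)}$ does not vanish and which is killed by neither $\mathrm{(R1)}$ nor $\mathrm{(R2)}$. Since $\rho_{n,3}$ is then at once a quotient of $S^{(3^{n-1},1)}\oplus S^{(3^{n-2},2,1^2)}$ and a module containing each of these two irreducibles, it equals their direct sum; alternatively one concludes by matching $\dim\rho_{n,3}$, computed from a surviving basis of caterpillar words, with $\dim S^{(3^{n-1},1)}+\dim S^{(3^{n-2},2,1^2)}$.
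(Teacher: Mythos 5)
Your Steps 1 and 2 correctly reproduce the paper's setup: the presentation of $\rho_{n,3}$ by caterpillar words modulo two families of GJI relations (the paper's (R1) and (R4) in Section 4.1), the identification of the ambient module as $\tilde{M}^{\lambda'}$ with $\lambda'=(n,n-1,n-1)$, and the observation that $S^{(3^{n-1},1)}$ occurs once and $S^{(3^{n-2},2,1^2)}$ occurs twice there. Your Step 3 also correctly identifies, via the FHSW $k=2$ theorem applied to columns $1$ and $2$, that modding out by (R1) alone yields the ``Pieri'' quotient $K_{(n,n-1)}\otimes D_{n-1}$ (the paper's equation (3.8)). So far this matches the paper's strategy, translated through Schur--Weyl duality.

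However, Step 3 then stops at exactly the point where the real work begins, and you flag it yourself as ``the step I expect to be the main obstacle.'' You state that you ``would'' produce, for each unwanted constituent $\nu$, a highest-weight vector lying in $\langle\mathrm{R2}\rangle$, plus bookkeeping to kill one of the two copies of $S^{(3^{n-2},2,1^2)}$ -- but you do not construct these vectors, do not verify that they are nonzero, and do not do the sign-tracking you acknowledge to be delicate. This is not a minor omission: it is the entire content of the paper's Lemmas 3.4--3.7 and Section 3.4. Concretely, the paper parametrizes the unwanted constituents $K_\mu$ by Pieri partitions $\mu=(n+c_1,n-1+c_2,n-1-c_1-c_2)$, identifies an explicit basis $\{\pi_{S_i}\}$ of $\Hom_G(D(\lambda),K_\mu)$ indexed by semistandard tableaux, and then computes that the coefficient of the leading basis element $T_0$ in $\pi_{S_i}\circ\gamma(e^\lambda)$ is always zero (Lemma 3.6) while the coefficient of $T_0$ in $\pi_{S_0}\circ\phi(e^\lambda)$ is nonzero (Lemma 3.7); this linear-independence forces the multiplicity of $K_\mu$ in $\Ima(\gamma)+\Ima(\phi)$ to saturate. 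In particular, the case $c_2=0,\,c_1=1$ corresponding to $\mu=(n+1,n-1,n-2)$ -- your $S^{(3^{n-2},2,1^2)}$ -- is exactly where the coefficient $1+(-1)^{c_1}c_1$ would vanish, which is why this partition is excluded and survives with multiplicity one (Lemma 3.5). None of this is deducible from a dominance-order argument alone: you also need to rule out constituents such as the one you single out, $S^{(3^{n-2},2,2)}$, and that requires the explicit computation. Your Step 4 lower-bound argument for $S^{(3^{n-1},1)}$ via induction from $\rho_{n,2}$ is a plausible alternative to the paper's Lemma 3.4, but for $S^{(3^{n-2},2,1^2)}$ you again defer to an unexhibited ``explicit element.'' As it stands the proposal is an accurate roadmap, not a proof.
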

The decomposition of $\rho_{n,k}$ into irreducibles has been obtained for $k=4$ recently in \cite{FHW1} and \cite{MS7} (the two proofs are substantially different). The decomposition of $\rho_{n,k}$ remains open for $k \ge 5$.

The purpose of this paper                                                                                                                                                                                                                                                                                                                                                                                                                            is to  prove Theorem \ref{mainspecht}. We approach the problem within the framework of representations of the general linear group $G=GL_N(\mathbb{K})$. Using ideas from \cite{MMS}, which in turn were based on ideas of Brauner, Friedmann, Hanlon, Stanley and Wachs in \cite{{BF},{FHSW},{FHW}}, we define and study a particular $G$-equivariant map
\begin{equation}\label{intro2}
	\Lambda(\lambda) \oplus \Lambda(\lambda) \oplus  \Lambda(\nu) \xrightarrow{\Omega(\gamma_1) + \Omega(\gamma_2)+ \Omega(\gamma_3)} \Lambda(\lambda).
\end{equation}
where $\Lambda(\lambda)= \Lambda^nV \otimes \Lambda^{n-1}V \otimes \Lambda^{n-1}V$ and $\Lambda(\nu)= \Lambda^nV \otimes \Lambda^{n}V \otimes \Lambda^{n-2}V$  are the tensor products of the indicated exterior powers of the natural $G$-module of column vectors and $N \ge 3n-2$. This map has the property that when we apply the Schur functor, we obtain a presentation of the multilinear component $\text{Lie}_n(m)$ of the free LAnKe, where $m=3n-2$. We analyze the effect of the map on irreducible summands of $\Lambda(\lambda)$ using combinatorics of semistandard tableaux.

In Section 2 we establish notation and gather some recollections to be used in the sequel. In Section 3 we analyze the map \[\Lambda(\lambda) \oplus \Lambda(\lambda) \xrightarrow{\Omega(\gamma_1) + \Omega(\gamma_2)} \Lambda(\lambda)\] and determine the irreducible decomposition of its cokernel (see Theorem \ref{main1}). In Section 4 we extend the analysis to the map (\ref{intro2}). The main result of this paper for $G$ describes the irreducible decomposition of the cokernel of (\ref{intro2}) (see Theorem \ref{main2}). In Section 5 we determine a presentation of $\text{Lie}_n(m)$ (see Lemma \ref{presentationlanke}). Using this and the Schur functor we show that Theorem \ref{mainspecht} follows from Theorem \ref{main2}.
\section{Preliminaries}
The purpose of this section is to establish notation and discuss results that will be used in the sequel. Our main references here are the books by Fulton \cite{F} and  Weyman \cite{W} and the paper \cite{AB} by Akin and Buchsbaum.
\subsection{Divided power algebra and exterior algebra}
Let $G=GL_N(\mathbb{K})$. We denote by $V=\mathbb{K}^N$  the natural $G$-module consisting of column vectors.

By $D=\bigoplus_{i\geq 0}D_i$ we denote the divided power algebra of $V$. We will recall some definitions and facts concerning this algebra. For more details we refer to \cite[Section 1.1]{W}. 

We recall that $D$ is defined as the graded dual of the symmetric algebra $S(V^*)$ of $V^*$, where $V^*$ is the dual of $V$. So by definition we have \[D_i = (S_i(V^*))^*.\] 

Since the characteristic of $\mathbb{K}$ is zero, $D$ is naturally isomorphic to the symmetric algebra $SV$ of $V$. However, the computations to be made in Sections 3 and 4.1 seem less involved if one deals with Weyl modules in place of Schur modules. For this reason we work with the divided power algebra and Weyl modules.

	If $v \in V$ and $i$ is a nonnegative integer, we have the $i$th divided power  $v^{(i)} \in D_i$ of $v$. In particular, \[ v^{(0)}=1 \ \mathrm{and} \ v^{(1)}=v\] for all $v \in V$. We recall that if $i,j$ are nonnegative integers, then the product $v^{(i)}v^{(j)}$ of $v^{(i)}$ and $v^{(j)}$ in $D$ is given by \[v^{(i)}v^{(j)}=\tbinom{i+j}{j}v^{(i+j)},\] where $\tbinom{i+j}{j}$ is the indicated binomial coefficient. These relations will be used many times in Sections 3 and 4.1.

If $\{e_1, \dots, e_N\}$ is a basis of the vector space $V$, then a basis of the vector space $D_i$ is the set \[\{e_1^{(\alpha_1)}\cdots e_N^{(\alpha_N)}: \alpha_1+\cdots + \alpha_N = i\}.\]

We recall that $D$ has a graded Hopf algebra structure. Let \[\Delta : D \to D \otimes D\] be the comultiplication map of $D$. Explicitly, for a homogeneous element $x= v_1^{(\alpha_1)}\cdots v_t^{(\alpha_t)} \in D_a$, where $v_i \in V$, we have \[ \Delta(x)=\sum_{0\le \beta_i \le \alpha_i} v_1^{(\beta_1)}\cdots v_t^{(\beta_t)} \otimes v_1^{(\alpha_1 - \beta_1)}\cdots v_t^{(\alpha_t - \beta_t)}.\]
For $0 \le b \le a$ we may restrict the above sum to those $\beta_i$ such that $\beta_1 + \cdots +  \beta_t= b$. This yields the following component of the comultiplication map \begin{align*} D_a &\to D_b \otimes D_{a-b}, \\  x &\mapsto \sum_{\substack{0\le \beta_i \le \alpha_i \\ \beta_1+\cdots+\beta_t=b}} v_1^{(\beta_1)}\cdots v_t^{(\beta_t)} \otimes v_1^{(\alpha_1 - \beta_1)}\cdots v_t^{(\alpha_t - \beta_t)}, \end{align*}
which we will again denote simply by $\Delta:  D_a \to D_b \otimes D_{a-b}$ in order to avoid cumbersome notation.

By coassociativity of the comultiplication map  $\Delta : D \to D \otimes D$, the compositions \begin{align*}D \xrightarrow{\Delta} D \otimes D \xrightarrow{1\otimes \Delta}D \otimes (D \otimes D),\\
D \xrightarrow{\Delta} D \otimes D \xrightarrow{\Delta \otimes 1}(D \otimes D) \otimes D	
 \end{align*}are equal. We refer to this map as the twofold comultiplication map $D \to D \otimes D \otimes D$. The component $D_a \to D_{a_1} \otimes D_{a_2} \otimes D_{a_3}$ of this map, where $a_i$ are nonnegative integers such that $a=a_1+a_2+a_3$, is given as follows,
\begin{align*} D_a &\to D_{a_1} \otimes D_{a_2} \otimes D_{a_3}, \\  x &\mapsto \sum_{\substack{0\le \beta_i + \gamma_i \le \alpha_i \\ \beta_1+\cdots+\beta_t=a_2\\\gamma_1 + \cdots \gamma_t = a_1}} v_1^{(\gamma_1)}\cdots v_t^{(\gamma_t)} \otimes v_1^{(\beta_1)}\cdots v_t^{(\beta_t)} \otimes v_1^{(\alpha_1 - \beta_1 - \gamma_1)}\cdots v_t^{(\alpha_t - \beta_t - \gamma_t)}, \end{align*}
where $x= v_1^{(\alpha_1)}\cdots v_t^{(\alpha_t)} \in D_a$.

By $\Lambda = \bigoplus_{i \ge 0} \Lambda^i$ we denote the exterior algebra of $V$. We recall that $\Lambda$ has a graded Hopf algebra structure. If $u,v \in \Lambda$, we denote their product in  $\Lambda$ by $uv$. If $\{e_1, \dots, e_N\}$ is a basis of the vector space $V$, then a basis of the vector space $\Lambda^i$ is the set \[\{e_{\alpha_1} e_{\alpha_{2}}\cdots  e_{\alpha_i}: 1 \le \alpha_1 < \cdots < \alpha_i \le N\}.\] We denote by \[\Delta : \Lambda \to \Lambda \otimes \Lambda\] the comultiplication map of $\Lambda$. Explicitly, for a homogeneous element $x= v_{1} v_{2}\cdots  v_{a} \in \Lambda^a$, where $v_i \in V$, we have \[ \Delta(x)=\sum_{0 \le s \le a}\sum_{\sigma} \sign(\sigma)v_{\sigma(1)} \dots v_{\sigma(s)} \otimes v_{\sigma(s+1)} \dots v_{\sigma(a)},\] where the second sum is over all permutations $\sigma$ of $\{1, \dots, a\}$ such that $\sigma(1) < \dots <\sigma(s)$ and $\sigma(s+1) < \dots < \sigma(a)$.

For $0 \le b \le a$ we have the following component of the comultiplication map \begin{align*} \Lambda^a &\to \Lambda^b \otimes \Lambda^{a-b}, \\  x &\mapsto \sum_{\sigma} \sign(\sigma)v_{\sigma(1)} \dots v_{\sigma(b)} \otimes v_{\sigma(b+1)} \dots v_{\sigma(a)}, \end{align*} where the sum is over all permutations $\sigma$ of $\{1, \dots, a\}$ such that $\sigma(1) < \dots <\sigma(b)$ and $\sigma(b+1) < \dots < \sigma(a)$. We will  denote this map simply by $\Delta:  \Lambda^a \to \Lambda^b \otimes \Lambda^{a-b}$.

By associativity of the multiplication map $\Lambda \otimes \Lambda \to \Lambda$, we have a well defined map $\Lambda \otimes \Lambda \otimes \Lambda \to \Lambda$ which we refer to as the twofold multiplication map.

We have used the same symbol $\Delta$ for the comultiplication maps in the algebra $D$ and $\Lambda$. In the sequel it will be clear which algebra is considered each time. If there is a need of distinction, we will write $\Delta_D$ and $\Delta_\Lambda$.

\subsection{Partitions, Weyl modules and Schur modules}For a positive integer $r$, let $\Lambda(N,r)$ be the set of sequences $\alpha=(\alpha_1, \dots, \alpha_N)$ of length $N$ of nonnegative integers such that $\alpha_1+\cdots +\alpha_N= r$ and let  $\Lambda^+(N,r)$ be the subset of $\Lambda(N,r)$ consisting of partitions, that is sequences $\mu=(\mu_1, \dots, \mu_N)$ such that $\mu_1 \ge \mu_2 \ge \dots \ge \mu_N$. The length $\ell(\mu)$ of a partition $\mu=(\mu_1, \dots, \mu_N)$ is the maximum $s$ such that $\mu_s \neq 0$.

For $\alpha=(\alpha_1,\dots, \alpha_N) \in \Lambda(N,r)$, let \[D(\alpha):=D(\alpha_1,\dots,\alpha_N)\] be the tensor product $D_{\alpha_1}\otimes \dots \otimes D_{\alpha_N}$ over $\mathbb{K}$. Likewise, for exterior powers let \[\Lambda(\alpha):=\Lambda(\alpha_1,\dots,\alpha_N)\] be the tensor product $\Lambda^{\alpha_1}\otimes \dots \otimes \Lambda^{\alpha_N}$ over $\mathbb{K}$.

For $\mu \in \Lambda^+(N,r)$, we denote by $K_\mu$ the corresponding Weyl module for $G$ and by $L_\mu$ the corresponding Schur module for $G$ defined in \cite[Section 2.1]{W}.  For example, when $\mu=(a)$ consists of one part, then $K_{(a)}=D_a$ and $L_{(a)}=\Lambda^{a}$. If $\mu=(1^a)$, then $K_{(a)}=\Lambda^a$ and $L_{(a)}=S_{a}$, where the later module is the degree $a$ symmetric power of the natural module $V$.

Since the characteristic of $\mathbb{K}$ is zero, for every $\mu \in \Lambda^+(N,r)$ the $G$-modules $K_\mu$ and $L_{\mu'}$ are isomorphic irreducible modules, where $\mu'$ denotes the conjugate partition of $\mu$, see \cite[Section 2.2]{W}.

It is a classical fact that the multiplicity of $K_\mu$ in $D(\mu)$ is equal to 1, see \cite[Corollary 2(a), Section 8.3]{F}. We denote by \[{\pi}_{\mu} : D(\mu) \to K_\mu\] the natural projection (which is unique up to a nonzero scalar multiple).

\subsection{Tableaux and semistandard basis}\label{2.2} Let us recall an  important combinatorial property of $K_{\mu}$. 

We fix the order $e_1<e_2< \cdots <e_N$ on the natural basis $\{e_1,\dots,e_N\}$ of $V$. In the sequel we will denote  $e_i$ by its subscript $i$. If $\mu=(\mu_1,\dots,\mu_N) \in \Lambda^+(N,r)$, a \textit{tableau} of shape $\mu$ is a filling of the diagram of $\mu$ with entries from $\{1,\dots,N\}$. A tableau is called \textit{row semistandard} if the entries are weakly increasing across the rows from left to right.  A row semistandard tableau is called \textit{semistandard} if the entries are strictly increasing down each column. We denote the set of row semistandard tableaux (respectively, semistandard  tableaux) of shape $\mu$ by $\mathrm{RSST}(\mu)$ (respectively, $\mathrm{SST}(\mu)$). The \textit{weight} of a tableau $S$ is the tuple $\alpha=(\alpha_1,\dots,\alpha_N)$, where $\alpha_i$ is the number of appearances of the entry $i$ in $S$. The set consisting of the semistandard (respectively, row semistandard) tableaux of shape $\mu$ and weight $\alpha$ will be denoted by $\mathrm{SST}_{\alpha}(\mu)$ (respectively, $\mathrm{RSST}_{\alpha}(\mu)$). For example, the following tableau of shape $\mu=(5,4,2)$
\[ \ytableausetup{smalltableaux}\begin{ytableau}
	\ 1&1&1&1&2\\
	\ 2&2&4&4 \\
	\ 3&4
\end{ytableau} \]
is semistandard and has weight $\alpha=(4,3,1,3)$. We will use `exponential' notation for row semistandard tableaux.

If $\mu = (\mu_1, \mu_2, \dots, \mu_N)$ is a partition and $S$ is a row semistandard tableau of shape $\mu$,
\[ S=\begin{matrix*}[l]
	1^{(a_{11})} \cdots N^{(a_{1N})} \\
	1^{(a_{21})} \cdots N^{(a_{2N})} \\
	\ \ \vdots \ \ \ \ \ \ \ \ \vdots \\
	1^{(a_{N1})} \cdots N^{(a_{NN})} \end{matrix*},\]
where $a_{ij}$ are nonnegative integers, let $e^S \in D(\mu)$ be the element \[e^S:=1^{(a_{11})} \cdots N^{(a_{1N})}  \otimes 1^{(a_{21})} \cdots N^{(a_{2N})} \otimes 1^{(a_{N1})} \cdots N^{(a_{NN})}\] obtained by `reading the rows' of $S$ from left to right and top to bottom. We note that \[\sum_ja_{ij} = \mu_i \ (i=1, \dots, N) \ \ \mathrm{and} \ \ \sum_ia_{ij} = \ \alpha_j \ (j=1, \dots N), \]
where $\alpha = (\alpha_1, \dots, \alpha_N)$ is the weight of $S$.

A classical result here is the following, see \cite[(2.1.15) Proposition]{W}. \begin{thm}\label{sbthm} Let $\mu \in \Lambda^+(N,r)$. Then there is a bijection between $\mathrm{SST}(\mu)$, and a basis of the $\mathbb{K}$-vector space $K_{\mu}$ given by 
$ S \mapsto \pi_{\mu}(e^S).$\end{thm}
We refer to the elements of this basis of $K_\mu$ as \textit{semistandard basis elements}.
\subsection{Straightening row semistandard tableaux} Let $\mu$ be a partition and $S \in \mathrm{RSST}(\mu)$. For $j \in \{1,2,\dots, \ell(\mu)-1\}$, consider the tableau \[S[j,j+1]\] consisting of rows $j$ and $j+1$ of $S$. We have the partition $(\mu_j, \mu_{j+1})$ consisting of rows $j$ and $j+1$ of $\mu$ and we have the corresponding Weyl module $K_{(\mu_j,\mu_{j+1})}$. From the last paragraph of the proof of \cite[(2.1.15) Proposition]{W} we have the following result.
\begin{lem}\label{insertrows}
	Let $\mu$ be a partition and $j \in \{1,2,\dots, \ell(\mu)-1\}$.  Let $S \in \mathrm{RSST}(\mu)$ be a row semistandard tableaux. If in $K_{(\mu_j,\mu_{j+1})}$ we have a linear combination \[\pi_{(\mu_j,\mu_{j+1})}(e^{S[j,j+1]})=\sum_ic_i\pi_{(\mu_j,\mu_{j+1})}(e^{S[j,j+1]_i}),\] where $c_i \in \mathbb{K}$ and $S[j,j+1]_i$ are row semistandard tableaux of shape $(\mu_j,\mu_{j+1})$, then in $K_\mu$ we have the linear combination \[\pi_{\mu}(e^{S})=\sum_ic_i\pi_{\mu}(e^{S_i}),\] where $S_i$ is the tableau obtained from $S$ by replacing rows $j$ and $j+1$ with $S[j,j+1]_i$.
\end{lem}

Roughly speaking, the previous lemma allows us to obtain relations in $K_\mu$ from relations involving any pair of consecutive rows of a tableau $S \in \mathrm{RSST}(\mu)$. 

In the sequel we will need to express elements of Weyl modules as explicit linear combinations of semistandard basis elements.   To this end, we will apply many times the above lemma together with the next lemma which concerns violations of semistandardness in the first column. 
\begin{lem}[{\cite[Lemma 4.2]{MS3}}]\label{lemglas}Let $\nu=(\nu_1,\nu_2)$ be a partition of length two and let \[S=\begin{matrix*}[l]
		1^{(a_1)}2^{(a_2)}  \cdots   N^{(a_N)} \\
		1^{(b_1)}2^{(b_2)}  \cdots  N^{(b_N)}
	\end{matrix*} \in \mathrm{RSST} (\nu).\]
	Then we have the following identities in $K_{\nu}$.
	\begin{enumerate}
		\item If $a_1+b_1>\nu_1$, then $\pi_\nu(e^S)=0$.
		\item If $a_1+b_1 \le \nu_1$, then 
		\begin{equation}\label{eqglas}\pi_\nu(e^S)=(-1)^{b_1}\sum_{k_2,\dots,k_N}\tbinom{b_2+k_2}{b_2}\cdots\tbinom{b_N+k_N}{b_N}
	\pi_\nu(e^{S(k_2, \dots, k_N)}),
		\end{equation} where \[S(k_2,\dots,k_N)= \begin{matrix*}[l]
			1^{(a_1+b_1)}2^{(a_2-k_2)}  \cdots   N^{(a_N-k_N)} \\
			2^{(b_2+k_2)}  \cdots  N^{(b_N+k_N)}
		\end{matrix*}\] and the sum ranges over all  nonnegative integers $k_2,\dots,k_N$
		such that  $k_2+\dots+k_N=b_1 $ and $k_s \le a_s$ for all $s=2,\dots,N$.	\end{enumerate}	
\end{lem}
 We may think of the sum in the right hand side of eq. (\ref{eqglas}) as been taken over all ways of replacing the $b_1$ 1's in the second row of the tableaux $S$ with $k_2$ 2's, $k_3$ 3's, $\dots$, $k_N$ N's from the first row of $S$, where $k_2+ k_3+ \dots + k_N=b_1$. 
 
 Even though our paper \cite{MS3} concerns modular representations, the proof of the above lemma given there is valid for any field in place of $\mathbb{K}$ (in fact for any commutative ring). In \cite[Lemma 4.2]{MS3} we used the notation $\Delta_\nu$ for the Weyl module $K_\nu$. 

We refer to the first equality of part (2) of Lemma \ref{lemglas} as \textit{raising the 1's from  row 2 of $S$ to row 1}. If the number of rows of the tableau $S \in \mathrm{RSST}(\mu)$ is greater than 2, then according to Lemma \ref{insertrows} we may apply Lemma \ref{lemglas} to any pair $(j,j+1)$ of consecutive rows of $S$ to \textit{raise the 1's from row $j+1$ of $S$ to row $j$}. By repeating this process a finite number of times, we may raise all the 1's to  row 1. (If the total number of 1's in $S$ is strictly greater than the length of the first row of $S$, then $\pi_\mu(e^S)=0$ by the first part of Lemma \ref{lemglas}.)

As an illustration of raising the 1's, we consider the following example. Note that at the end of this example we raise the 2's from row 3 to row 2.

\begin{ex}\label{ex} Let $\mu=(4,3,2)$ and \[ S=\begin{matrix*}[l]
		1 2^{(2)}3 \\
		123 \\
		13 \end{matrix*} \in \mathrm{RSST}(\mu).\] We will express $\pi_\mu(e^S)$ as a linear combination of semistandard basis elements of the Weyl module $K_\mu$ using Lemma \ref{lemglas}.
		
		Let $\nu=(\mu_2,\mu_3)=(3,2)$. Applying Lemma \ref{lemglas}(2) for the tableau \[ S[2,3]=\begin{matrix*}[l] 
			123 \\
			13 \end{matrix*} \in \mathrm{RSST}(\nu)\] we obtain \[ \pi_{\nu}(123 \otimes 13)=-\pi_{\nu}(1^{(2)}3 \otimes 23) - \tbinom{2}{1}\pi_{\nu}(1^{(2)}2 \otimes 3^{(2)}) 
		\]
		and thus from Lemma \ref{insertrows} we get \begin{equation}\label{exeq1} \pi_\mu(e^S)=\pi_{\mu}(1 2^{(2)}3 \otimes 123 \otimes 13)=-\pi_{\mu}(1 2^{(2)}3 \otimes 1^{(2)}3 \otimes 23) - \tbinom{2}{1}\pi_{\mu}(1 2^{(2)}3 \otimes 1^{(2)}2 \otimes 3^{(2)}). 
		\end{equation} Thus we have raised the 1's from  row 3 of $S$ to row 2. 
		
		Next, in each summand of the right hand side of eq. (\ref{exeq1}), we raise the 1's from row 2 to row 1. 
		
		Let $\nu=(\mu_1,\mu_2)=(4,3)$. Consider the first summand of the right hand side of eq. (\ref{exeq1}). Applying Lemma \ref{lemglas}(2) for the tableau \[ \begin{matrix*}[l] 
			12^{(2)}3 \\
			1^{(2)}3 \end{matrix*} \in \mathrm{RSST}(\nu)\] we obtain \[ \pi_{\nu}(12^{(2)}3 \otimes 1^{(2)}3 )=(-1)^2 \big( \pi_{\nu}(1^{(3)}3 \otimes 2^{(2)}3) + \tbinom{2}{1}\pi_{\nu}(1^{(3)}2 \otimes 23^{(2)})  \big)
		\]
		and thus from Lemma \ref{insertrows} we get \begin{equation}\label{exeq2} \pi_{\mu}(12^{(2)}3 \otimes 1^{(2)}3 \otimes 23)=(-1)^2 \big( \pi_{\mu}(1^{(3)}3 \otimes 2^{(2)}3 \otimes 23) + \tbinom{2}{1}\pi_{\mu}(1^{(3)}2 \otimes 23^{(2)}  \otimes 23) \big). \end{equation}
		Similarly, for the second summand of the right hand side of eq. (\ref{exeq1}) we get
		\begin{equation}\label{exeq3} \pi_{\mu}(12^{(2)}3 \otimes 1^{(2)}2 \otimes 3^{(2)})=(-1)^2 \big( \tbinom{3}{1}\pi_{\mu}(1^{(3)}3 \otimes 2^{(3)} \otimes 3^{(2)}) + \tbinom{2}{1}\pi_{\mu}(1^{(3)}2 \otimes 2^{(2)}3  \otimes 3^{(2)})  \big). \end{equation}
		
		Substituting eqs. (\ref{exeq2}) and (\ref{exeq3}) in eq. (\ref{exeq1}) we find \begin{align}\label{exeq4}
		\pi_{\mu}(e^S)=&- \pi_{\mu}(1^{(3)}3 \otimes 2^{(2)}3 \otimes 23) -2 \pi_{\mu}(1^{(3)}2 \otimes 23^{(2)}  \otimes 23) \\\nonumber& - 6\pi_{\mu}(1^{(3)}3 \otimes 2^{(3)} \otimes 3^{(2)}) - 4 \pi_{\mu}(1^{(3)}2 \otimes 2^{(2)} 3 \otimes 3^{(2)}).
		\end{align}
		We note that in each summand of the right hand side of eq. (\ref{exeq4}), all the 1's are located in the first row.
		
		The third and fourth summands in the right hand side of eq. (\ref{exeq4}) are multiples of semistandard basis elements of the Weyl module $K_\mu$. However, the first and second summands in the right hand side of eq. (\ref{exeq4}) are multiples of $\pi_\mu(e^{S_1})$ and $\pi_\mu(e^{S_2})$ respectively, where \[S_1=\begin{matrix*}[l]
			1^{(3)}3 \\
			2^{(2)}3 \\
			23 \end{matrix*}, \ S_2 =\begin{matrix*}[l]
			1^{(3)}2 \\
			23^{(2)} \\
			23 \end{matrix*} \]
	and these tableaux are not semistandard because in both cases the 2 in the third row presents a violation of semistandardness. We may apply Lemma \ref{lemglas}(2) for the tableaux \[S_1[2,3]=\begin{matrix*}[l] 
		2^{(2)}3 \\
		23 \end{matrix*} \ \ \mathrm{and} \ \ S_2[2,3]=\begin{matrix*}[l] 
		23^{(2)} \\
		23 \end{matrix*} \]to raise the 2 from row 3 of $S_i$, $i=1,2$, to row 2. (In the notation of Lemma \ref{lemglas}, the 1's and 2's in the entries of the tableau are replaced by 2's and 3's respectively). Thus \begin{align*}\pi_\mu(e^{S_1})=&-2\pi_\mu(1^{(3)}3 \otimes 2^{(3)} \otimes 3^{(2)}), \\ \pi_\mu(e^{S_2})=&-2\pi_\mu(1^{(3)}3 \otimes 2^{(2)}3 \otimes 3^{(2)}) \end{align*}
	and the right hand sides are multiples of semistandard basis elements. Substituting in eq. (\ref{exeq4})	we obtain $\pi_\mu(e^S)$ as a linear combination of semistandard basis element of $K_\mu$. 
	\end{ex}
	
		\begin{rem}Let $\mu$ be a partition and $S$ a row semistandard tableau of shape $\mu$. We remark that by successive applications of Lemma \ref{lemglas} and Lemma \ref{insertrows} we may express $\pi_\mu(e^S)$ as a linear combination of various $\pi_\mu(e^{S\{j\}})$, where each tableau $S\{j\}$ has the property that all the 1's are located on the first row, all the 2's are located on the first two rows etc. This property, in general, does not imply that $S\{j\}$ is semistandard, for example $\begin{matrix*}[l] 
				124^{(2)} \\
				23^{(2)} \end{matrix*}$ is not semistandard.  However, it  turns out for the particular tableaux $S$ that appear in Section 3 (i.e. in the proofs of Lemmas \ref{lemma1}, \ref{lemma2}, \ref{lemma3}, \ref{lemma4} and \ref{gamma3}), the $S\{j\}$ obtained from $S$ by applying successively Lemma \ref{lemglas} and Lemma \ref{insertrows} are semistandard.  \end{rem}
\subsection{Projections} Since the characteristic of $\mathbb{K}$ is zero, every finite dimensional polynomial representation $M$ of $G$ is a direct sum of Weyl modules. The multiplicity of $K_\mu$, where $\mu \in \Lambda^+(N,r)$, as a summand of $M$ is equal to the dimension of the vector space $\Hom_G(M,K_\mu)$. When $M=D(\alpha)$, where $\alpha \in \Lambda(N,r)$, the dimension of $\Hom_G(D(\alpha),K_\mu)$, which is known as a Kostka number, is equal to the cardinality of the set $\mathrm{SST}_\alpha (\mu)$. In the sequel, we will need to identify different copies of $K_\mu$ in $D(\alpha)$. To this end we need an explicit basis of $\Hom_G(D(\alpha),K_\mu)$ which we describe next.

In what follows we will restrict our discussion to partitions that have at most three parts, since only such partitions are needed in the sequel. 

Suppose $\mu=(\mu_1,\mu_2,\mu_3) \in \Lambda^+(3,r)$, $\alpha=(\alpha_1,\alpha_2,\alpha_3) \in \Lambda(3,r)$ and $S$ is a row semistandard tableau of shape $\mu$ and weight $\alpha$
\[ S=\begin{matrix*}[l]
	1^{(a_{11})} 2^{(a_{12})} 3^{(a_{13})} \\
	1^{(a_{21})} 2^{(a_{22})} 3^{(a_{23})}\\
	1^{(a_{31})} 2^{(a_{32})} 3^{(a_{33})}. \end{matrix*}\]
This means that for the matrix $A=(a_{ij})$ the row sums are given by $\mu$ 
\[a_{i1}+a_{i2}+a_{i3}=\mu_i \ (i=1,2,3)\]
and the column sums by $\alpha$
\[a_{1j}+a_{2j}+a_{3j}=\alpha_j \ (j=1,2,3).\] We refer to $A$ as the matrix of the row semistandard tableau $S$. \begin{defn}\label{phiS}Suppose $\mu=(\mu_1,\mu_2,\mu_3) \in \Lambda^+(3,r)$, $\alpha=(\alpha_1,\alpha_2,\alpha_3) \in \Lambda(3,r)$ and $S$ is a row semistandard tableau of shape $\mu$ and weight $\alpha$. Let $A=(a_{ij})$ be the matrix of $S$. 
	
	\begin{enumerate} \item Define a map of $G$-modules \[\phi_S : D(\alpha) \to D{(\mu)}\] as the following composition
\begin{align}\label{phis} D(\alpha_1, \alpha_2, \alpha_3) \xrightarrow{\Delta_1\otimes \Delta_2 \otimes \Delta_3}& D(a_{11},a_{21},a_{31}) \otimes D(a_{12},a_{22},a_{32}) \otimes D(a_{13},a_{23},a_{33}) \\\nonumber \simeq &D(a_{11},a_{12},a_{13}) \otimes D(a_{21},a_{22},a_{23}) \otimes D(a_{31},a_{32},a_{33}) \\\nonumber\xrightarrow{m_1 \otimes m_2 \otimes m_3 }& D(\mu_1, \mu_2, \mu_3),
\end{align}
where $\Delta_i : D(a_{1i}+a_{2i}+a_{3i}) \to D(a_{1i}, a_{2i}, a_{3i})$ is the indicated component of twofold comultiplication of the Hopf algebra $D$,  the isomorphism permutes tensor factors, and $m_i $: $D(a_{i1}, a_{i2}, a_{i3}) \to D(a_{i1}+a_{i2}+a_{i3})$ is the indicated component of twofold multiplication in the algebra $D$. 
\item Define the map of $G$-modules \[\pi_S : D(\alpha) \to K_\mu\] as the composition \[\pi_S : D(\alpha) \xrightarrow{\phi_S} D(\mu) \xrightarrow{\pi_\mu} K_\mu.\]
\end{enumerate}\end{defn}

We note that if in the previous definition the matrix $A$ of the tableau $S$ is diagonal, then $\alpha = \mu$ and $\pi_S = \pi_\mu$. 

\begin{ex}\label{exphiS}
Suppose \[\mu = (5,4,1), \ \ \alpha = (3,6,1) \ \ \mathrm{and} \ \ S =\begin{matrix*}[l]
	1^{(2)} 2^{(3)} \\
	12^{(3)} \\
	3 \end{matrix*} \in \mathrm{RSST}_\alpha({\mu}).\] Then the matrix $A$ of the tableau $S$ is \[A= \begin{pmatrix} 2&3&0\\
	1&3&0\\0&0&1\end{pmatrix}.\] With the notation of Definition \ref{phiS}(1)  we have the comultiplication maps 
	\begin{align*}
		&\Delta_1 : D(3) \to D(2,1,0), \\
		&\Delta_2 : D(6) \to D(3,3,0), \\
		&\Delta_3 : D(1) \to D(0,0,1).\end{align*}
	Consider the map \[\phi_S: D(\alpha) \to D(\mu) \] and the element \[x=12^{(2)}\otimes 12^{(5)} \otimes 3 \in D(\alpha).\]
According to Definition \ref{phiS}(1)  we have \begin{align*}
	&\Delta_1(12^{(2)})=12 \otimes 2 + 2^{(2)}\otimes 1, \\
	&\Delta_2(12^{(5)})=12^{(2)} \otimes 2^{(3)}+2^{(3)} \otimes 12^{(2)}, \\
	&\Delta_3(3)=3.\end{align*} Hence the image of $x$ under the map $\phi_S:D(3,6,1) \to D(5,4,1)$ is equal to 
\begin{align*}
	\phi_S(x)&= \tbinom{1+1}{1} \tbinom{1+2}{1}\tbinom{1+3}{1} 1^{(2)}2^{(3)} \otimes  2^{(4)} \otimes 3 \\ &+ \Big( \tbinom{1+3}{1}\tbinom{1+2}{1}+\tbinom{2+2}{2} \Big) 12^{(4)}\otimes 12^{(3)} \otimes 3 +\tbinom{2+3}{2}\tbinom{1+1}{1} 2^{(5)}\otimes 1^{(2)}2^{(2)} \otimes 3.
\end{align*} 
The binomial coefficients come from the multiplication in the divided power algebra $D$.
\end{ex}

Similarly to Definition \ref{phiS}(1) we have a map for exterior powers in place of divided powers. \begin{defn}\label{defpsiS}Suppose $\mu=(\mu_1,\mu_2,\mu_3) \in \Lambda^+(3,r)$, $\alpha=(\alpha_1,\alpha_2,\alpha_3) \in \Lambda(3,r)$ and $S$ is a row semistandard tableau of shape $\mu$ and weight $\alpha$. Let $A=(a_{ij})$ be the matrix of $S$. Define a map of $G$-modules \[\psi_S : \Lambda(\alpha) \to \Lambda{(\mu)}\] as the following composition
\begin{align}\label{psiS} \Lambda(\alpha_1, \alpha_2, \alpha_3) \xrightarrow{\Delta_1\otimes \Delta_2 \otimes \Delta_3}& \Lambda(a_{11},a_{21},a_{31}) \otimes \Lambda(a_{12},a_{22},a_{32}) \otimes \Lambda(a_{13},a_{23},a_{33}) \\\nonumber \cong &\Lambda(a_{11},a_{12},a_{13}) \otimes \Lambda(a_{21},a_{22},a_{23}) \otimes \Lambda(a_{31},a_{32},a_{33}) \\\nonumber\xrightarrow{m_1 \otimes m_2 \otimes m_3 }& \Lambda(\mu_1, \mu_2, \mu_3),
\end{align}
where $\Delta_i$ is the indicated component of twofold comultiplication of the exterior algebra $\Lambda$,  the isomorphism permutes tensor factors, and $m_i$ is the indicated component of twofold multiplication in the exterior algebra $\Lambda$.\end{defn}

From \cite[Section 2, eq. (11)]{AB} we know the following for the maps $\pi_S : D(\alpha) \to K_\mu$ of Definition \ref{phiS}. \begin{prop}\label{wbasis} A basis of the vector space $\Hom_G(D(\alpha), K_\mu)$ is the set \begin{equation}
	\{\pi_S: S \in \mathrm{SST}_\alpha(\mu)\}.
\end{equation}\end{prop}

\begin{rem}\label{cyclic}
	It is well known that for every $\alpha \in \Lambda(N,r)$, the $G$-module $D(\alpha)$ is cyclic and a generator is the element \begin{equation}\label{ealpha} e^\alpha := 1^{(\alpha_1)} \otimes 2^{(\alpha_2)} \otimes \cdots \otimes N^{(\alpha_N)}.\end{equation} Hence the map $\pi_S : D(\alpha) \to K_\mu$ of Definition \ref{phiS} is determined by the image $\pi_S (e^\alpha)$ of $e^\alpha$.
\end{rem}

\subsection{The functor $\Omega$}\label{Omega}
Let us recall that there is an algebra involution on the ring of symmetric functions that sends the Schur function $s_\mu$ to $s_{\mu'}$ for every partition $\mu$ \cite[6.2]{F}. In terms of representations, we recall from \cite[p. 189]{AB} that, since the characteristic of $\mathbb{K}$ is zero, there is an involutory natural equivalence $\Omega$ from the category of polynomial representations of $G$ of degree r, where $N \ge r$, to itself that has the following properties.  \begin{enumerate} \item $\Omega(D(\alpha)) = \Lambda (\alpha)$ for all $\alpha \in \Lambda(N,r)$ and  $\Omega (K_\mu)=L_\mu$ for every $\mu \in \Lambda^+(N,r)$. More generally, if $\mu(1) \in \Lambda^{+}(N,r_1), \dots, \mu(q) \in \Lambda^{+}(N,r_q)$ are partitions such that $r_1 + \cdots + r_q =r$, then \[\Omega(K_{\mu(1)} \otimes \cdots \otimes K_{\mu(q)} ) = L_{\mu(1)} \otimes \cdots \otimes L_{\mu(q)}.\]\item The functor $\Omega$ preserves the comultiplication and multiplication maps of the Hopf algebras $D$ and $\Lambda$. 
	
	To be precise, this means that for all $(\alpha_1, \dots, \alpha_N) \in \Lambda(N,r)$ and all $s$ the images under $\Omega$ of the maps \begin{align*}&1\otimes \cdots \otimes \Delta_D \otimes \cdots \otimes 1 :D(\alpha_1, \dots, \alpha_s, \dots, \alpha_N) \to D(\alpha_1, \dots, \alpha'_s, \alpha''_s \dots, \alpha_N), \\&
		1\otimes \cdots \otimes \eta_D \otimes \cdots \otimes 1 :D(\alpha_1, \dots, \alpha_s, \alpha_{s+1} \dots, \alpha_N) \to D(\alpha_1, \dots, \alpha_s +\alpha_{s+1} \dots, \alpha_N)
		\end{align*}
are the maps
\begin{align*}&1\otimes \cdots \otimes \Delta_{\Lambda} \otimes \cdots \otimes 1 :\Lambda(\alpha_1, \dots, \alpha_s, \dots, \alpha_N) \to \Lambda(\alpha_1, \dots, \alpha'_s, \alpha''_s \dots, \alpha_N), \\&
	1\otimes \cdots \otimes \eta_{\Lambda} \otimes \cdots \otimes 1 :\Lambda(\alpha_1, \dots, \alpha_s, \alpha_{s+1} \dots, \alpha_N) \to \Lambda(\alpha_1, \dots, \alpha_s +\alpha_{s+1} \dots, \alpha_N)
\end{align*}
respectively. Here, $\alpha_s = \alpha'_s +\alpha''_s$ and $\Delta_D :D_{\alpha_s} \to D_{\alpha'_s} \otimes D_{\alpha''_s}$ 	and $\eta_D: D_{\alpha_s} \otimes D_{\alpha_{s+1}} \to D_{\alpha_s +\alpha_{s+1}}$
are the indicated components of the comultiplication and multiplication maps of the divided power algebra $D$ respectively. Likewise, $\Delta_\Lambda :\Lambda^{\alpha_s} \to \Lambda^{\alpha'_s} \otimes \Lambda^{\alpha''_s}$ 	and $\eta_\Lambda: \Lambda^{\alpha_s} \otimes \Lambda^{\alpha_{s+1}} \to \Lambda^{\alpha_s +\alpha_{s+1}}$
are the indicated components of the comultiplication and multiplication maps of the exterior algebra $\Lambda$ respectively.	
	
\item $\Omega(\tau_{s,D}) = (-1)^{\alpha_s \alpha_{s+1}}\tau_{s,\Lambda}$ for all $\alpha =(\alpha_1, \dots, \alpha_N) \in \Lambda(N,r)$ and all $s$, where $\tau_{s,D}: D(\alpha) \to D(\alpha)$ (respectively, $\tau_{s,\Lambda}: \Lambda(\alpha) \to \Lambda(\alpha)$) is the map that interchanges the factors $D_{\alpha_s}$ and $D_{\alpha_{s+1}}$ (respectively, $\Lambda^{\alpha_s}$ and $\Lambda^{\alpha_{s+1}}$) and is the identity on the rest.\item The functor $\Omega$ is exact.\end{enumerate}

\section{A result for Weyl modules}\label{sec3}
Suppose $n \ge 2$. Throughout this section,  $\lambda$, $\mu$ and $\nu$ are the following partitions of $3n-2$
\begin{align*}\lambda&:=(n,n-1,n-1), \\ \mu&:=(n+1,n-1,n-2), \\ \nu&:=(n,n,n-2).\end{align*}
This section contains some important results on Weyl modules whose proofs  are quite technical. We give here a summary of the overall picture. We define certain maps $\gamma_1, \gamma_2  \in \Hom _G(D(\lambda), D(\lambda))$ and $\gamma_3  \in \Hom _G(D(\nu), D(\lambda))$ (see Definition \ref{mapg} and Definition \ref{mapg3}). The main result of Section 3 is the determination of the cokernel of the map \[\gamma_1 + \gamma_2 + \gamma_3 : D(\lambda) \oplus D(\lambda) \oplus D(\nu) \to D(\lambda),\] where $(\gamma_1+\gamma_2+\gamma_3)(x,y,z)=\gamma_1(x)+\gamma_2(y)+\gamma_3(z)$ for $x,y \in D(\lambda)$ and $z \in D(\nu)$ (see Theorem \ref{main2}). This is done in a few steps.

(a) Let us consider the map $\gamma_1+\gamma_2 : D(\lambda) \oplus D(\lambda) \to D(\lambda)$, where $(\gamma_1+\gamma_2)(x,y)=\gamma_1(x)+\gamma_2(y)$ for $x,y \in D(\lambda)$. We show that the composition $D(\lambda) \xrightarrow{\gamma_i} D(\lambda) \xrightarrow{\pi_\lambda} K_\lambda$ is the zero map for $i=1,2$, from which it follows that $K_\lambda$ is not a summand of  $\Ima(\gamma_1+\gamma_2)$ (see Lemma \ref{lemma1}).

(b) Let $\{\pi_{R_1}, \pi_{R_2}\}$ be the basis of $\Hom_G(D(\lambda), K_\mu)$ given by Proposition \ref{wbasis}. First we show that the maps 
$D(\lambda) \xrightarrow{\gamma_1} D(\lambda) \xrightarrow{\pi_{R_j}} K_\mu$ for $i=1,2$ are nonzero and linearly dependent. Next we show that each of the maps $D(\lambda) \xrightarrow{\gamma_2} D(\lambda) \xrightarrow{\pi_{R_j}} K_\mu$ is the zero map for $i=1,2$. From these results it follows that the multiplicity of $K_\mu$ in $\Ima(\gamma_1+\gamma_2)$ is equal to $1$ (see Lemma \ref{lemma2}).

(c) In Lemmas \ref{lemma3} and \ref{lemma4} we prove a combinatorial property of irreducible summands of  the tensor product $K_{(n,n-1)} \otimes D_{n-1}$ under the actions of the maps $\gamma_1$ and ${\gamma_2}$ respectively. From this it follows that the multiplicity in $\Ima(\gamma_1+\gamma_2)$ of $K_\xi$ is equal to zero for any summand $K_\xi$ of $K_{(n,n-1)} \otimes D_{n-1}$ such that $\xi \neq \lambda, \mu$ (see the proof of Theorem \ref{main1}). Combining this and steps (a), (b) we obtain that $\coker(\gamma_1+\gamma_2) \cong K_\lambda \oplus K_\mu$ (see Theorem \ref{main1}).

(d) With the notation of step (b), we show that each of the maps  $D(\nu) \xrightarrow{\gamma_3} D(\lambda) \xrightarrow{\pi_{\lambda}} K_\lambda$ and $D(\nu) \xrightarrow{\gamma_3} D(\lambda) \xrightarrow{\pi_{R_i}} K_\mu$ is the zero map, where $i=1,2$ (see Lemma \ref{gamma3}). From this and step (c) it follows that $\coker(\gamma_1+\gamma_2+\gamma_3) \cong K_\lambda \oplus K_\mu$ (see Theorem \ref{main2}).
\subsection{The maps $\gamma_1$ and $\gamma_2$}
\begin{defn}\label{mapg} Define $\gamma_1, \gamma_2  \in \Hom _G(D(\lambda), D(\lambda))$ by \begin{align*}\gamma_1 &:= \phi_{S(1)}+ (-1)^n\phi_{S(2)} \\ \gamma_2 &:= \phi_{S(1)}+(-1)^n \phi_{S(3)} +(-1)^n \phi_{S(4)},\end{align*} where the tableaux $S(i) \in \mathrm{RSST}_\lambda(\lambda) $ are the following
	\[ S(1):=\begin{matrix*}[l]
		1^{(n)}  \\
		2^{(n-1)} \\
		3^{(n-1)} \end{matrix*}, \; S(2):=\begin{matrix*}[l]
		12^{(n-1)}  \\
		1^{(n-1)} \\
		3^{(n-1)} \end{matrix*}, \; S(3):=\begin{matrix*}[l]
		1^{(n)}  \\
		3^{(n-1)} \\
		2^{(n-1)} \end{matrix*}, \; S(4):=\begin{matrix*}[l]
		23^{(n-1)}  \\
		12^{(n-2)} \\
		1^{(n-1)} \end{matrix*}. \]	
\end{defn}

\begin{rem}\label{gamma1} Concerning the map $\gamma_1$ defined above, we observe that, according to Definition \ref{phiS}, $\phi_{S(1)} :D(\lambda) \to D(\lambda)$ is the identity map and $\phi_{S(2)} :D(\lambda) \to D(\lambda)$ is the composition \begin{align*}  D_n \otimes D_{n-1} \otimes D_{n-1} &\xrightarrow {\Delta \otimes 1 \otimes 1} (D_1 \otimes D_{n-1}) \otimes D_{n-1} \otimes D_{n-1} \\&\xrightarrow{1 \otimes \tau \otimes 1}  D_1 \otimes (D_{n-1} \otimes D_{n-1}) \otimes D_{n-1} \\&\xrightarrow{\eta \otimes 1 \otimes 1}  D_n \otimes D_{n-1} \otimes D_{n-1}, \end{align*}
	where $\Delta: D_{n} \to D_1 \otimes D_{n-1}$ (respectively, $\eta: D_{1} \otimes D_{n-1} \to D_{n}$) is the indicated component of the comultiplication map (respectively, multiplication map) of the divided power algebra $D$, and $\tau: D_{n-1} \otimes  D_{n-1} \to D_{n-1} \otimes  D_{n-1}$ is the map defined by $\tau(x \otimes y) =y \otimes x$, for $x, y \in D_{n-1} $.
\end{rem}

Recall from eq. (\ref{ealpha}) the notation $e^{\lambda}=1^{(n)} \otimes 2^{(n-1)} \otimes 3^{(n-1)}$.

\begin{lem}\label{ge} With the notation of Definition \ref{mapg}, let $\epsilon = (-1)^n$. Then \begin{align}\label{gammae}&\gamma_1(e^{\lambda})= e^{\lambda} +\epsilon12^{(n-1)} \otimes 1^{(n-1)} \otimes 3^{(n-1)}, \\ \label{phie}&\gamma_2(e^{\lambda})= e^{\lambda} +\epsilon1^{(n)} \otimes 3^{(n-1)} \otimes 2^{(n-1)}+\epsilon23^{(n-1)} \otimes 12^{(n-2)} \otimes 1^{(n-1)}.\end{align} \end{lem}
\begin{proof}Both equations follow from Definition \ref{mapg} and Definition \ref{phiS}(1).
	\end{proof}

The motivation for considering such maps will become clear in sections 5.1 and 5.2. Roughly speaking, the images of these maps correspond to certain relations of the multilinear component $\text{Lie}_n(m)$ of the free LAnKe that are consequences of the generalized Jacobi identity. 

We intend to prove the following theorem in Section \ref{proofmain}, which is the first main result of Section \ref{sec3} . Consider the maps $\gamma_1, \gamma_2 : D(\lambda) \to D(\lambda)$ and recall we have the map  $\gamma_1 +\gamma_2 : D(\lambda) \oplus D(\lambda) \to D(\lambda)$ defined by $(\gamma_1 +\gamma_2)(x,y) =\gamma_1(x) + \gamma_2(y)$, where $x,y \in D(\lambda)$. It follows that $\Ima(\gamma_1 +\gamma_2)=\Ima(\gamma_1)+\Ima(\gamma_2)$. \begin{thm}\label{main1}
	Let $N \ge 3n-2$. Then, the cokernel of the map \[\gamma_1 +\gamma_2 : D(\lambda) \oplus D(\lambda) \to D(\lambda) \] is isomorphic to $K_{\lambda} \oplus K_{\mu}$ as $G$-modules.
\end{thm}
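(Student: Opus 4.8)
The core task is to compute the cokernel of $\gamma + \phi : D(\lambda) \oplus D(\lambda) \to D(\lambda)$ with $\lambda = (n,n-1,n-1)$. Since everything lives in characteristic zero, $D(\lambda)$ decomposes as a direct sum of Weyl modules $K_\mu$ with multiplicities given by Kostka numbers $|\mathrm{SST}_\lambda(\mu)|$. The plan is to first pin down this decomposition of $D(\lambda)$ explicitly, then understand how $\gamma$ and $\phi$ act on each isotypic component, and finally read off what survives in the cokernel. Because $N \ge 3n-2 = |\lambda|$, no partition of $3n-2$ with at most three... wait — $\lambda$ has three parts so all relevant $\mu \in \Lambda^+(N,3n-2)$ have at most three parts, and the stable range guarantees nothing is truncated. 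So I would begin by listing the partitions $\mu \unrhd \lambda$ (dominance) with $\mu_1 + \mu_2 + \mu_3 = 3n-2$ and $\mu$ having at most three rows, together with the Kostka numbers $|\mathrm{SST}_\lambda(\mu)|$. The relevant small-multiplicity partitions will be $\lambda = (n,n-1,n-1)$ and $(n+1,n-1,n-2)$ — the two that appear in the answer — plus $(n+1,n,n-3), (n+2,n-2,n-2), \dots$ and the ones with multiplicity needing care.

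\textbf{Main steps.} First, I would use the description of $\Hom_G(D(\lambda),K_\mu)$ from Section 2.3: a basis is $\{\pi_S : S \in \mathrm{SST}_\lambda(\mu)\}$, and each map $\phi_{S(i)}$ is explicitly computable on the cyclic generator $e^\lambda$ via the formulas \eqref{gammae}, \eqref{phie}. So for each $\mu$ with $|\mathrm{SST}_\lambda(\mu)|$ small, I would compute the composite $\pi_\mu \circ \phi_{S(i)} : D(\lambda) \to K_\mu$ by applying it to $e^\lambda$ and expressing the result in the semistandard basis of $K_\mu$ (straightening). This tells me exactly the rank of the restriction of $\gamma + \phi$ to $K_\mu \oplus K_\mu \to K_\mu$ inside $D(\lambda)^{\oplus 2} \to D(\lambda)$ — more precisely, I project $\gamma$ and $\phi$ to $\Hom_G(D(\lambda), K_\mu)$ and determine the span of the two resulting vectors. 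Second, for the components $K_\mu$ with $\mu \ne \lambda, (n+1,n-1,n-2)$, I must show that $\mathrm{image}(\gamma|_\mu) + \mathrm{image}(\phi|_\mu)$ is all of the $K_\mu$-isotypic part of $D(\lambda)$ — i.e. the two (or more) images together surject. Third, for $\mu = \lambda$: here the isotypic component may have multiplicity $> 1$; I need to show the images of $\gamma$ and $\phi$ together cover a codimension-one subspace of the $K_\lambda$-isotypic part, leaving exactly one copy of $K_\lambda$ in the cokernel. Fourth, for $\mu = (n+1,n-1,n-2)$: show one full copy of $K_\mu$ survives. Combining, the cokernel is $K_\lambda \oplus K_{(n+1,n-1,n-2)}$.

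\textbf{The key computation and the main obstacle.} The crux is the straightening: applying $\phi_{S(i)}$ to $e^\lambda$ produces elements of $D(\mu)$ that are not in semistandard form, and one must run the straightening algorithm (using the relations defining $K_\mu$ as a quotient of $D(\mu)$) to rewrite $\pi_\mu\bigl(\phi_{S(i)}(e^\lambda)\bigr)$ in the basis $\{\pi_\mu(e^T) : T \in \mathrm{SST}_\mu\}$. For the two distinguished partitions this straightening must be done with enough care to see that $\gamma$ and $\phi$ land in a proper subspace (so that something survives) while for all other $\mu$ the images jointly fill the isotypic component. I expect the hardest case to be controlling the multiplicity-$\ge 2$ component at $\mu = \lambda$ itself: one must show precisely that $\dim\bigl(\mathrm{Hom}_G(D(\lambda),K_\lambda)\bigr) - \dim\bigl(\mathrm{span}\{\gamma|_\lambda, \phi|_\lambda\}\bigr) = 1$, which requires knowing $|\mathrm{SST}_\lambda(\lambda)|$ exactly and verifying that the two projected homomorphisms are linearly independent and together with the image of the ambient map account for all but one dimension. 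A clean way to organize this is to note that $\gamma$ and $\phi$ both fix the "leading" copy of $K_\lambda$ (the one through $\pi_\lambda$ itself, since $\phi_{S(1)} = \mathrm{id}$-component and the other terms $\phi_{S(i)}$ map $e^\lambda$ to elements whose $\pi_\lambda$-image is computed by straightening) — hence $\mathrm{coker}$ retains that copy — and then to show every other Weyl summand of $D(\lambda)$, except one copy of $K_{(n+1,n-1,n-2)}$, is hit. I would handle the generic $\mu$ by a dimension/rank count showing the $2 \times |\mathrm{SST}_\lambda(\mu)|$ "matrix of coefficients" has full row rank $|\mathrm{SST}_\lambda(\mu)|$, and isolate $(n+1,n-1,n-2)$ as the single exceptional case where this rank drops by one.
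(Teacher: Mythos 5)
Your overall approach — decompose $D(\lambda)$ into isotypic components, compute how $\gamma$ and $\phi$ act on each via straightening, and read off the cokernel — is indeed the shape of the paper's argument. But there are two substantive gaps.

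\textbf{The key structural shortcut is missing.} The paper does \emph{not} attack a generic $\mu$ with a direct rank computation. Instead it leans on the known result $\coker(\gamma) \simeq K_{(n,n-1)} \otimes D_{n-1}$ (a restatement, via the $\Omega$-duality and the inverse Schur functor, of the FHSW theorem that $\rho_{n,2}$ is irreducible). This has two consequences you don't exploit. First, the only $\mu$ that can survive in $\coker(\gamma+\phi)$ are those in the Pieri set $\mathrm{Par}(K_{(n,n-1)}\otimes D_{n-1})$, i.e.\ partitions $(n+c_1,n-1+c_2,n-1-c_1-c_2)$ with $c_2\in\{0,1\}$; all other summands of $D(\lambda)$ — including your candidate $(n+2,n-2,n-2)$ — are already killed by $\gamma$ alone and need no computation. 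Second, for $\mu$ in the Pieri set with $|\mathrm{SST}_\lambda(\mu)|=q+1$, the short exact sequence for $\coker(\gamma)$ immediately gives $\dim\Hom_G(\Ima(\gamma),K_\mu)=q$, so the span of $\{\pi_{S_i}\circ\gamma\}$ is at least $q$-dimensional for free. Then one only needs a \emph{single} extra computation per $\mu$: show $\pi_{S_0}\circ\phi$ lies outside that span (Lemma \ref{lemma4} vs.\ Lemma \ref{lemma3}, comparing the coefficient of $T_0$). This is the mechanism that finishes the generic case cleanly.

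\textbf{The rank count you sketch does not work as stated.} You propose to show a ``$2\times|\mathrm{SST}_\lambda(\mu)|$ matrix of coefficients has full row rank $|\mathrm{SST}_\lambda(\mu)|$.'' A $2\times k$ matrix has rank at most $2$, and Kostka numbers here can exceed $2$. The object you actually need is the $(q+1)\times(q+1)$ matrix recording the action of $\gamma$ (resp.\ $\phi$) on the $K_\mu$-isotypic component — equivalently, the induced endomorphism of $\Hom_G(D(\lambda),K_\mu)$. Proving that the combined image of these two endomorphisms is all of that $(q+1)$-dimensional space is exactly the place where the $\coker(\gamma)$ shortcut is indispensable; without it, you are facing an unbounded family of explicit determinant/rank computations with no indicated uniform strategy. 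Also, a minor point: you worry that the $\mu=\lambda$ isotypic component may have multiplicity $>1$, but $|\mathrm{SST}_\lambda(\lambda)|=1$, so it is in fact the easiest case — one just checks $\pi_\lambda\circ\gamma=\pi_\lambda\circ\phi=0$; and the correct phrasing is that $\gamma$ and $\phi$ \emph{annihilate} (not ``fix'') the $K_\lambda$ component, which is why it survives in the cokernel.
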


\subsection{The cokernel of the map $\gamma_1: D(\lambda) \to D(\lambda)$}
For the proof of Theorem \ref{main1} we will need to identify the cokernel of the map $\gamma_1: D(\lambda) \to D(\lambda)$. This may be done using a result from \cite{MMS} which we now recall.

\begin{defn}\cite[Definition 5.1]{MMS} \begin{enumerate}\item Let $\beta_{n-1}$ be the map of tensor product of exterior powers \[\beta_{n-1}: \Lambda^n \otimes \Lambda^{n-1} \to \Lambda^n \otimes \Lambda^{n-1}\] given by the composition
\[  \Lambda^n \otimes \Lambda^{n-1} \xrightarrow {\Delta \otimes 1} \Lambda^1 \otimes \Lambda^{n-1} \otimes \Lambda^{n-1} \xrightarrow{\tau}  \Lambda^1 \otimes \Lambda^{n-1} \otimes \Lambda^{n-1} \xrightarrow{\eta \otimes 1}  \Lambda^{n} \otimes \Lambda^{n-1}, \] 
where $\Delta: \Lambda^{n} \to \Lambda^1 \otimes \Lambda^{n-1}$ (respectively, $\eta: \Lambda^{1} \otimes \Lambda^{n-1} \to \Lambda^{n}$) is the indicated component of the comultiplication map (respectively, multiplication map) of the exterior algebra $\Lambda$, and $\tau: \Lambda^{n-1} \otimes  \Lambda^{n-1} \to \Lambda^{n-1} \otimes  \Lambda^{n-1}$ is the defined by $\tau(x \otimes y) =y \otimes x$, for $x, y \in \Lambda^{n-1} $. \item Let $g_{n-1}$ be the map \[g_{n-1}: \Lambda^n \otimes \Lambda^{n-1} \to \Lambda^n \otimes \Lambda^{n-1}, \ g_{n-1}(x\otimes y) =x \otimes y - \beta_{n-1}(x \otimes y). \] \end{enumerate}\end{defn}
To be precise, the maps $\beta_{n-1}$ and $g_{n-1}$ above are the special cases of the maps $\beta_k$ and $\gamma_k$ defined in \cite[Definition 5.1]{MMS} for $a=n$ and $ b=k=n-1$ in the notation of loc. cit. Our $g_{n-1}$ is denoted  $\gamma_{n-1}$ in loc. cit. The $\gamma_{n-1}$ of \cite{MMS} should not be confused with the maps given in Definition \ref{mapg} of the present paper.

We recall the following special case of \cite[Corollary 5.4]{MMS}. Here $L_{(n,n-1)}$ denotes the Schur module corresponding to the partition $(n, n-1)$ (see Section 2.2).
\begin{lem}\label{cokg} Suppose $N \ge 2n-1$. Then $\coker(g_{n-1}) \simeq L_{(n,n-1)}$.
\end{lem}
The main idea of the proof of this lemma given in \cite{MMS} is the computation of the eigenvalues of the map $g_{n-1}: \Lambda^n \otimes \Lambda^{n-1} \to \Lambda^n \otimes \Lambda^{n-1}$ on the irreducible summands of $\Lambda^n \otimes \Lambda^{n-1}$. This was accomplished with the use of combinatorics of tableaux and in particular the straightening law.

Now we may identify the cokernel of the map $\gamma_1: D(\lambda) \to D(\lambda)$ of Definition \ref{mapg}.
\begin{lem}\label{lem37} Suppose $N \ge 3n-2$. Then $\coker(\gamma_1) \simeq K_{(n,n-1)} \otimes D(n-1)$.
\end{lem}
\begin{proof}
	Consider the involutive functor $\Omega$ of Section \ref{Omega}. Using properties (1) - (3) of $\Omega$, it follows from Remark \ref{gamma1}, that the image of the map  \[g_{n-1} \otimes 1: \Lambda^n \otimes \Lambda^{n-1} \otimes \Lambda^{n-1} \to \Lambda^n \otimes \Lambda^{n-1}\otimes \Lambda^{n-1}\] under $\Omega$ is the map $\gamma_1: D(n,n-1,n-1) \to D(n,n-1,n-1)$. Since $\Omega$ is an exact functor, we have  \[\coker{\gamma_1} \simeq \Omega (\coker{g_{n-1} \otimes 1}) \simeq \Omega (L_{(n,n-1)} \otimes \Lambda^{n-1}) \simeq K_{(n,n-1)} \otimes D_{n-1},\]
	where the middle isomorphism is due to Lemma \ref{cokg}.
\end{proof}

Consider the map \[\gamma_1 +\gamma_2 : D(\lambda) \oplus D(\lambda) \to D(\lambda) \] in the statement of Theorem \ref{main1}. An immediate consequence of Lemma \ref{lem37} is the following. 
\begin{cor}\label{cor38}
	Suppose $N \ge 3n-2$. Then the $G$-module $\coker{(\gamma_1+\gamma_2)}$ is a quotient of $K_{(n,n-1)} \otimes D_{n-1}$.
\end{cor}

We note without pursuing details that  another proof of Lemma \ref{lem37} may be obtained using \cite[Theorem 1.3]{FHSW} by applying first the `inverse' Schur functor \cite[pg. 56]{Gr} and then the functor $\Omega$.
\subsection{The actions of the maps $\gamma_1$ and $\gamma_2$}
The next four lemmas analyze the action of the map $\gamma_1 +\gamma_2 : D(\lambda) \oplus D(\lambda) \to D(\lambda)$ on the irreducible summands of $D(\lambda)$. By Corollary \ref{cor38}, we need only consider those irreducible summands of $D(\lambda)$ that are summands of the module $K_{(n,n-1)} \otimes D_{n-1}$.

As mentioned at the beginning of Section 3, the first two lemmas concern the summands $K_{\lambda}$ and $K_{\mu}$. We show in the last two lemmas that the other possible irreducible summands have a combinatorial property that will be utilized in Section 3.4 to prove that their multiplicities in $\coker(\gamma_1 +\gamma_2) $ are in fact zero.
\subsection*{Multiplicities of the irreducibles $K_\lambda$ and $K_{\mu}$ in $\coker(\gamma_1+\gamma_2)$}
\begin{lem}\label{lemma1} The multiplicity of $K_{\lambda}$ in $\coker{(\gamma_1 + \gamma_2)}$ is equal to one.
\end{lem}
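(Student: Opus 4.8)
The plan is to compute $\dim \Hom_G(\coker(\gamma+\phi), K_\lambda)$, which equals the multiplicity of $K_\lambda$ in the cokernel since characteristic zero makes everything semisimple. Applying $\Hom_G(-, K_\lambda)$ to the exact sequence
\[
D(\lambda) \oplus D(\lambda) \xrightarrow{\gamma+\phi} D(\lambda) \to \coker(\gamma+\phi) \to 0
\]
gives the exact sequence
\[
0 \to \Hom_G(\coker(\gamma+\phi), K_\lambda) \to \Hom_G(D(\lambda), K_\lambda) \xrightarrow{(\gamma^*, \phi^*)} \Hom_G(D(\lambda), K_\lambda) \oplus \Hom_G(D(\lambda), K_\lambda),
\]
where $\gamma^*(f) = f \circ \gamma$ and $\phi^*(f) = f \circ \phi$. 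So the multiplicity equals $\dim \ker(\gamma^*, \phi^*) = \dim\{f : f\circ\gamma = 0 \text{ and } f \circ \phi = 0\}$.

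**The key computation.**

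Since $\lambda = (n,n-1,n-1)$ has three parts, by Section 2.3 the space $\Hom_G(D(\lambda), K_\lambda)$ has basis $\{\pi_S : S \in \mathrm{SST}_\lambda(\lambda)\}$. The first step is to enumerate $\mathrm{SST}_\lambda(\lambda)$, the semistandard tableaux of shape $\lambda$ and content $\lambda$; I expect this to be a small explicit list (the Kostka number $K_{\lambda\lambda}$), and I would write each such $\pi_S$ down via its value $\pi_S(e^\lambda) \in K_\lambda$ expressed in the semistandard basis. Then for a general $f = \sum_S c_S \pi_S$ I would compute $f(\gamma(e^\lambda))$ and $f(\phi(e^\lambda))$ using the explicit formulas \eqref{gammae} and \eqref{phie}: one must evaluate each $\pi_S$ on the extra terms $12^{(n-1)} \otimes 1^{(n-1)} \otimes 3^{(n-1)}$, $1^{(n)} \otimes 3^{(n-1)} \otimes 2^{(n-1)}$, and $23^{(n-1)} \otimes 12^{(n-2)} \otimes 1^{(n-1)}$, which amounts to applying the comultiplication-multiplication composite $\phi_S$ of \eqref{phis} and then straightening via $\pi_\lambda$ into the semistandard basis. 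Setting $f \circ \gamma = 0$ and $f \circ \phi = 0$ yields a homogeneous linear system in the coefficients $c_S$; the claim is that its solution space is one-dimensional.

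**Expected main obstacle and how to handle it.**

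The hard part will be the straightening: expressing $\pi_\lambda$ applied to the non-semistandard monomials appearing in $\phi_S(e^\lambda)$ in terms of the semistandard basis of $K_\lambda$, using the Weyl-module relations (the quadratic relations defining $K_\mu$, e.g. as in \cite[Section 2]{W}). For the monomials coming from $S(2), S(3), S(4)$ this requires genuine computation with the relations in $D(\lambda)$ modulo $\ker \pi_\lambda$. To keep this manageable I would exploit the three-part shape: all columns have length at most three, so the relevant relations are the ones exchanging entries between two rows, and there are only finitely many monomials of content $\lambda$ to track. A useful shortcut is that one obvious solution of the system is already at hand — the map $\pi_\lambda$ itself: one should check directly that $\pi_\lambda \circ \gamma = 0$ and $\pi_\lambda \circ \phi = 0$ is \emph{not} expected (indeed $\pi_\lambda(e^\lambda) \neq 0$ and $\gamma, \phi$ are built from the GJI relations), so instead the unique (up to scalar) $f$ in the kernel will be a specific combination $\sum c_S \pi_S$ that annihilates precisely the relation elements; identifying it is the crux. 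Once the system is shown to have rank equal to $|\mathrm{SST}_\lambda(\lambda)| - 1$, the lemma follows. I would also sanity-check the answer against Theorem \ref{main1}, which asserts $\coker(\gamma+\phi) \cong K_\lambda \oplus K_{(n+1,n-1,n-2)}$, so $K_\lambda$ must indeed appear with multiplicity exactly one.
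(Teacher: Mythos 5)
Your framework is correct, but you've overlooked a basic fact that collapses the whole problem and, worse, you've guessed the opposite of what actually happens. The Kostka number $K_{\lambda\lambda}$ is equal to $1$ for \emph{every} partition $\lambda$: the only semistandard tableau of shape $\lambda$ and weight $\lambda$ is the one whose $i$-th row is filled entirely with $i$'s (any other filling forces a column violation or a weight mismatch). Hence $\mathrm{SST}_\lambda(\lambda)$ is a singleton, $\Hom_G(D(\lambda), K_\lambda)$ is one-dimensional, and it is spanned by $\pi_\lambda$ itself. There is no "linear system in the coefficients $c_S$" to solve: the multiplicity of $K_\lambda$ in $\coker(\gamma+\phi)$ is either $0$ or $1$, and it is $1$ precisely when $\pi_\lambda \circ \gamma = 0$ and $\pi_\lambda \circ \phi = 0$.

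This is exactly where your "shortcut" goes astray. You write that $\pi_\lambda \circ \gamma = 0$ and $\pi_\lambda \circ \phi = 0$ ``is not expected,'' and that the sought $f$ should be ``a specific combination $\sum c_S \pi_S$'' distinct from $\pi_\lambda$. In fact $\pi_\lambda$ is the only candidate, and the content of the lemma is precisely that it \emph{does} vanish on $\Ima(\gamma)$ and $\Ima(\phi)$. Concretely, evaluating on the generator $e^\lambda$ and using (\ref{gammae}), the term $\pi_\lambda(12^{(n-1)} \otimes 1^{(n-1)} \otimes 3^{(n-1)})$ straightens to $(-1)^{n-1}\pi_\lambda(e^\lambda)$ (raise the $1$'s from row 2 to row 1), so $\pi_\lambda\circ\gamma(e^\lambda) = (1 + (-1)^{2n-1})\pi_\lambda(e^\lambda) = 0$; a similar two-line computation using (\ref{phie}) gives $\pi_\lambda\circ\phi(e^\lambda)=0$ (one of the three terms straightens to $-\pi_\lambda(e^\lambda)$ and the other vanishes outright by a column-repeat relation). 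So the proof is much shorter than you anticipated, and no enumeration or rank computation is needed. You should also note that this vanishing is not a surprise to be resisted: $\gamma$ and $\phi$ encode GJI relations, and Lie$_n(m)$ is known to be nonzero with $K_\lambda$ the "top" constituent, so $\pi_\lambda$ had better kill the relation elements.
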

\begin{proof}
	Let $T_0=\pi_{\lambda}(e^{\lambda})$. First we show that the composition $\pi_{\lambda} \circ \gamma_1$ is the zero map. Using eq. (\ref{gammae}) we have \[\pi_{\lambda} \circ \gamma_1(e^{\lambda})=T_0+(-1)^n\pi_{\lambda}(12^{(n-1)} \otimes 1^{(n-1)} \otimes 3^{(n-1)}).\] Applying Lemma \ref{lemglas}(2) to raise the 1's from row 2 to row 1, we have \[\pi_{\lambda}(12^{(n-1)} \otimes 1^{(n-1)} \otimes 3^{(n-1)})=(-1)^{n-1}\pi_{\lambda}(1^{(n)} \otimes 2^{(n-1)} \otimes 3^{(n-1)})=(-1)^{n-1}T_0.\] By substituting we obtain \[\pi_{\lambda} \circ \gamma_1(e^{\lambda}) = T_0 +(-1)^{2n-1}T_0=0.\]
	Since $e^\lambda$ generates the $G$-module $D(\lambda)$ (according to Remark \ref{cyclic}) and $\pi_\mu, \gamma_1$ are maps of $G$-modules, the above equation yields $\pi_{\lambda} \circ \gamma_1=0$.
	
	Next we show that the composition $\pi_{\lambda} \circ \gamma_2$ is the zero map. Using eq. (\ref{phie}) we have \begin{equation}\label{341}\pi_{\lambda} \circ \gamma_2(e^{\lambda})=T_0+(-1)^n\pi_{\lambda}(1^{(n)} \otimes 3^{(n-1)} \otimes 2^{(n-1)}) +  (-1)^n\pi_{\lambda}(23^{(n-1)} \otimes 12^{(n-2)} \otimes 1^{(n-1)}).\end{equation}
	We apply to the second summand in the right hand side of eq. (\ref{341}) Lemma \ref{lemglas}(2) (to raise the 2's from row 3 to row 2) obtaining
	\[\pi_{\lambda}(1^{(n)} \otimes 3^{(n-1)} \otimes 2^{(n-1)})= (-1)^{n-1}\pi_{\lambda}(1^{(n)} \otimes 2^{(n-1)} \otimes 3^{(n-1)})=(-1)^{n-1}T_0.\]
	For the third summand in the right hand side of eq. (\ref{341}) we have \[\pi_{\lambda}(23^{(n-1)} \otimes 12^{(n-2)} \otimes 1^{(n-1)})=0\]
because of Lemma \ref{lemglas}(1) applied to rows 2 and 3 (where the number of 1's is equal to $1+n-1 = n$ which is greater than the length $n-1$ of the second row.)

 Thus $K_{\lambda}$ is not a summand of the image $\Ima(\gamma_1 + \gamma_2)$. Since  the multiplicity of $K_{\lambda}$ in the codomain $D({\lambda})$ of the map $\gamma_1 +\gamma_2 : D(\lambda) \oplus D(\lambda) \to D(\lambda)$ is equal to 1, we conclude that the multiplicity of $K_{\lambda}$ in $\coker{(\gamma_1 + \gamma_2)}$ is equal to one.
\end{proof}

\begin{lem}\label{lemma2} The multiplicity of $K_{\mu}$ in $\coker{(\gamma_1 + \gamma_2)}$ is equal to one.
\end{lem}
\begin{proof}
	Recall that $\mu = (n+1,n-1,n-2)$. There are exactly two semistandard tableaux of shape $\mu$ and weight $\lambda$, 
	
	\[ R_1:=\begin{matrix*}[l]
		1^{(n)}3  \\
		2^{(n-1)} \\
		3^{(n-2)} \end{matrix*}, \;R_2:=\begin{matrix*}[l]
		1^{(n)}2  \\
		2^{(n-2)}3 \\
		3^{(n-2)} \end{matrix*}.  \]
	According to Definition \ref{phiS} and Proposition \ref{wbasis}, the corresponding projections are 	\begin{align*} &\pi_{R_1}:D(\lambda) \to K_\mu, e^{\lambda} \mapsto \pi_\mu(1^{(n)}3\otimes2^{(n-1)} \otimes 3^{(n-2)}), \\&\pi_{R_2}:D(\lambda) \to K_\mu, e^{\lambda} \mapsto \pi_\mu(1^{(n)}2\otimes2^{(n-2)}3 \otimes 3^{(n-2)}).\end{align*}
	We compute the maps $\pi_{R_i} \circ \gamma_1, \ \pi_{R_i} \circ \gamma_2 \in \Hom_G(D(\lambda), K_\mu)$. 
	
	For notational convenience, let $T_i:=\pi_{R_i}(e^\lambda)$, $i=1,2$. Using eq. (\ref{gammae}) of Lemma \ref {ge}  we have
	\[
		\pi_{R_1} \circ \gamma_1 (e^\lambda)=T_1+(-1)^n\pi_{\mu}(12^{(n-1)}3 \otimes 1^{(n-1)} \otimes 3^{(n-2)}).
	\]
	We apply Lemma \ref{lemglas}(2) to the second summand of the right hand side to obtain
	\begin{align*}
		(-1)^n\pi_{\mu}(12^{(n-1)}3 \otimes 1^{(n-1)} \otimes 3^{(n-2)}&=(-1)^{2n-1}\big(\pi_{\mu}(1^{(n)}3 \otimes 2^{(n-1)} \otimes 3^{(n-2)}) \\& \;\;\;\; + \pi_{\mu}(1^{(n)}2 \otimes 2^{(n-2)}3 \otimes 3^{(n-2)}) \big)\\&=-T_1-T_2.
	\end{align*}
	Thus \[\pi_{R_1} \circ \gamma_1 (e^\lambda)=-T_2\]

	In a similar manner we have \begin{align*}	\pi_{R_2} \circ \gamma_1 (e^\lambda)&=T_2+(-1)^n\tbinom{2}{1} \pi_{\mu}(1^{(2)}2^{(n-1)} \otimes 1^{(n-2)}3 \otimes 3^{(n-2)})\\&=T_2+(-1)^{2n-2}\tbinom{2}{1}T_2 \\&=3T_2.\end{align*}
	
	Since $e^\lambda$ generates the $G$-module $D(\lambda)$, we conclude from the above that the restrictions of the maps $\pi_{R_1}, \pi_{R_2} \in \Hom_G(D(\lambda), K_\mu)$ to the image $\Ima(\gamma_1) \subseteq D(\lambda)$ are linearly dependent. 
	
	We show below that the restrictions of  $\pi_{R_1}, \pi_{R_2}$ to the image $\Ima(\gamma_2) \subseteq D(\lambda)$ are both zero. This implies that the restrictions of $\pi_{R_1}, \pi_{R_2}$ to $\Ima(\gamma_1 +\gamma_2)$ are linearly dependent. By the above computation, these restrictions are nonzero since $T_2 \neq 0$. Thus the multiplicity of $K_\mu$ in $\Ima(\gamma_1 +\gamma_2)$ is equal to 1. Finally, the multiplicity of  $K_\mu$ in $\coker(\gamma_1+\gamma_2)$ is equal to $2-1=1$.
	
	It remains to be shown that the restrictions of  $\pi_{R_1}, \pi_{R_2}$ to the image $\Ima(\gamma_2) \subseteq D(\lambda)$ are both zero. To this end, we first show the identity \begin{equation}\label{10}
		\pi_\mu(123^{(n-1)} \otimes 12^{(n-2)} \otimes 1^{(n-2)})=(-1)^{n-1}(T_1+T_2 ).
	\end{equation} The idea is to raise all the 1's to the first row and all the 2's to the first two rows. This will be done applying Lemma \ref{lemglas}(2) several times. 
	We start by applying \ref{lemglas}(2) to rows 2 and 3, 
\[	\pi_\mu(123^{(n-1)} \otimes 12^{(n-2)} \otimes 1^{(n-2)})=(-1)^{n-2}\pi_\mu(123^{(n-1)} \otimes 1^{(n-1)} \otimes 2^{(n-2)}).\]
Now we apply the same Lemma to rows 1 and 2 
\[(-1)^{n-2}\pi_\mu(123^{(n-1)} \otimes 1^{(n-1)} \otimes 2^{(n-2)})=- \pi_\mu(1^{(n)}2 \otimes 3^{(n-1)} \otimes 2^{(n-2)})-\pi_\mu(1^{(n)}3 \otimes 23^{(n-2)} \otimes 2^{(n-2)} ).\]	
We apply Lemma \ref{lemglas}(2) to each summand in the right hand side to rows 2 and 3 (to raise the 2's from the third row to the second) and we obtain 
\[(-1)^{n-2}\pi_\mu(123^{(n-1)} \otimes 1^{(n-1)} \otimes 2^{(n-2)})=(-1)^{n-1}(T_1+T_2).\]	
Thus we have show eq. (\ref{10}).

	Now using (\ref{phie}) we obtain \begin{align*}\pi_{R_1} \circ \gamma_2 (e^{\lambda}) &= T_1 +(-1)^n \pi_\mu(1^{(n)}2 \otimes 3^{(n-1)} \otimes 2^{(n-2)}) \\& \;\;\;\; +(-1)^n\pi_\mu(123^{(n-1)} \otimes 12^{(n-2)} \otimes 1^{(n-2)}) \\&= T_1+(-1)^{n+n-2}T_2+(-1)^{n+n-1}(T_1+T_2) \\&=0,
	\end{align*}
	where in the second equality we used once again Lemma \ref{lemglas}(2) and (\ref{10}).
	
	By a similar computation, we have
	\begin{align*}\pi_{R_2} \circ \gamma_2 (e^{\lambda}) &= T_2 +(-1)^n \pi_\mu(1^{(n)}3 \otimes 23^{(n-2)} \otimes 2^{(n-2)}) \\& \;\;\;\; +(-1)^n \Big(\pi_\mu(123^{(n-1)} \otimes 12^{(n-2)} \otimes 1^{(n-2)})  \\& \;\;\;\; +\tbinom{2}{1}^2 \pi_\mu(2^{(2)}3^{(n-1)} \otimes 1^{(2)}2^{(n-3)} \otimes 1^{(n-2)})\Big) \\&= T_2+(-1)^{n+n-2}T_1+(-1)^{n+n-1}(T_1+T_2 +\tbinom{2}{1}^20) \\&=0,
	\end{align*}
	where we used $\pi_\mu(2^{(2)}3^{(n-1)} \otimes 1^{(2)}2^{(n-3)} \otimes 1^{(n-2)})=0$ according to  Lemma \ref{lemglas}(1) applied to rows 2 and 3.
\end{proof}
\subsection*{Two combinatorial lemmas}
The next two lemmas concern a certain combinatorial property of irreducible summands of the tensor product $K_{(n,n-1)} \otimes  D_{n-1}$ under the actions of the maps $\gamma_1$ and $\gamma_2$. 

We need some notation. Let $\mathrm{Par}(K_{(n,n-1)} \otimes  D_{n-1})$ be the subset of $\Lambda(N, 3n-2)$ consisting of the partitions $\xi$ such that $K_\xi$ is a summand of $K_{(n,n-1)} \otimes  D_{n-1}$. By Pieri's rule, for example see \cite[Corollary 2.3.5]{W}, we have that $\mathrm{Par}(K_{(n,n-1)} \otimes  D_{n-1})$ consists of those partitions $\xi$ that are of the form \begin{equation}\label{mu}\xi=(n + c_1, n-1 + c_2, c_3),\end{equation}
for some nonnegative integers $c_1, c_2, c_3$ satisfying $c_1 +c_2 +c_3 = n-1$ and $c_2 \in \{0,1\}$.

Let us fix a partition $\xi=(n + c_1, n-1 + c_2, n-1-c_1-c_2)$ as in (\ref{mu}). One easily verifies that for each $i=0, \dots, c_1$, there is a unique semistandard tableau $U_i \in \mathrm{SST}_{\lambda}(\xi)$ such that the number of  2's in the first row is equal to $i$, and moreover \[\mathrm{SST}_\lambda(\xi) = \{U_0, U_1, \dots, U_{c_1} \}.\] We have 
\begin{equation}\label{Si} U_i=\begin{matrix*}[l]
		1^{(n)}2^{(i)}3^{(c_1-i)} \\
		2^{(n-1-i)}3^{(c_2+i)} \\
		3^{(n-1-c_1-c_2)}. \end{matrix*}\end{equation} 
Let \[Z_i :=\pi_\xi(e^{U_i}) \in K_\xi, \ \ i=0, \dots, c_1.\]  Since $\mathrm{SST}_\lambda(\xi) = \{U_0, U_1, \dots, U_{c_1} \}$, we know from Theorem \ref{sbthm} that the set \[\{Z_0, Z_1, \dots, Z_{c_1}\}\] is a basis of the Weyl module $K_\xi.$

The next lemma concerns the action of $\gamma_1$ on $K_\xi$.

\begin{lem}\label{lemma3}Let $\xi \in \mathrm{Par}(K_{(n,n-1)} \otimes  D_{n-1})$. Then for every semistandard $U_i \in \mathrm{SST}_{\lambda}(\xi)$ , the coefficient of $Z_0=\pi_\xi(e^{U_0}) \in K_\mu$ in the expression of  $\pi_{U_i} \circ \gamma_1 (e^\lambda) \in K_\xi$ as a linear combination of the basis elements $Z_0, Z_1, \dots, Z_{c_1}$ is equal to $0$.
\end{lem}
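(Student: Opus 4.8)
The plan is to compute $\pi_{S_i}\circ\gamma(e^\lambda)$ explicitly using the formula $\gamma(e^\lambda)=e^\lambda+\epsilon\,12^{(n-1)}\otimes 1^{(n-1)}\otimes 3^{(n-1)}$ from (\ref{gammae}), and then track only the coefficient of $T_0=\pi_\mu(e^{S_0})$ after straightening into the semistandard basis. First I would apply the map $\phi_{S_i}$ to each of the two tensors in $\gamma(e^\lambda)$. Applying $\phi_{S_i}$ to $e^\lambda=1^{(n)}\otimes 2^{(n-1)}\otimes 3^{(n-1)}$: since the matrix of $S_i$ distributes the $n$ ones all to row $1$, the $n-1$ twos into rows $1$ and $2$ (namely $i$ in row $1$ and $n-1-i$ in row $2$), and the $n-1$ threes into rows $1,2,3$ (namely $c_1-i$, $c_2+i$, $n-1-c_1-c_2$), we get via comultiplication and multiplication the element $\pi_\mu(e^{S_i})=T_i$ up to the binomial coefficients coming from the divided-power comultiplication; so this term contributes $(\text{binomial factor})\,T_i$. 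The coefficient of $T_0$ here is nonzero only when $i=0$, where it equals $1$.

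Next I would apply $\phi_{S_i}$ to the second tensor $12^{(n-1)}\otimes 1^{(n-1)}\otimes 3^{(n-1)}$, whose column-weight is still $\lambda=(n,n-1,n-1)$ but whose rows are now $(1,n-1,0)$, $(n-1,0,0)$, $(0,0,n-1)$ in the $(1,2,3)$-count sense — i.e. row $1$ has one $1$ and $n-1$ twos, row $2$ has $n-1$ ones, row $3$ has $n-1$ threes. The result $\phi_{S_i}$ applied to this is some element of $D(\mu)$; projecting via $\pi_\mu$ and straightening gives a linear combination $\sum_j c_{ij}\,T_j$. The key point is to show that after straightening, the coefficient of $T_0$ in $\epsilon\cdot\pi_\mu(\phi_{S_i}(12^{(n-1)}\otimes 1^{(n-1)}\otimes 3^{(n-1)}))$ exactly cancels the contribution from the first tensor. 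I expect this to follow from \cite[Lemma 4.2(2)]{MS3}: raising the $n-1$ ones from row $2$ into row $1$ produces a sign $(-1)^{n-1}$ together with a sum over the ways of placing them, and combined with $\epsilon=(-1)^n$ this yields $(-1)^{2n-1}=-1$ times the term matching $T_0$ (plus other $T_j$'s with $j>0$, which are irrelevant here). One then observes that the $T_0$-contributions from the two tensors sum to $1\cdot[i=0]+(-1)\cdot[i=0]=0$ for $i=0$, while for $i\ge 1$ the first tensor contributes nothing to $T_0$ and the straightening of the second also produces no $T_0$ (the semistandard tableau reached by raising ones has at least one $2$ forced into the first row only when the $2$'s already sit there, matching $S_i$ with $i\ge1$, never $S_0$).

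The main obstacle I anticipate is the bookkeeping in the straightening step: making precise which semistandard tableaux arise when \cite[Lemma 4.2(2)]{MS3} is applied to $\pi_\mu(12^{(n-1)}\otimes 1^{(n-1)}\otimes 3^{(n-1)})$ inside $K_\mu$ (as opposed to inside $K_\lambda$, as in Lemma \ref{lemma1}), and verifying that $S_0$ — the tableau with \emph{no} $2$'s in the first row — appears with coefficient exactly $+(-1)^{n-1}$ so that the cancellation is exact. I would handle this by arguing that the content of row $3$ of $\mu$ forces the $c_2+i$ threes arising in row $2$ of $S_i$ to be distributed in a controlled way, and that the only contribution to $T_0$ comes from the unique straightening move that empties the first row of $2$'s, which carries coefficient $1$ before the sign. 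Combining the sign $\epsilon$ with the $(-1)^{n-1}$ from the lemma gives the cancellation; all remaining terms are $T_j$ with $j\ge 1$ and contribute nothing to the coefficient of $T_0$, completing the proof.
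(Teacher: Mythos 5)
Your plan is essentially the paper's proof: apply $\phi_{S_i}$ to both terms of $\gamma(e^\lambda)$, project into $K_\mu$, straighten via the raising Lemma 4.2(2) of [MS3], and track the coefficient of $T_0$. The paper carries this out explicitly and finds
\[
\pi_{S_i}\circ\gamma(e^\lambda)=T_i+(-1)^{i+1}\tbinom{i+1}{1}\Bigl(T_i+\tbinom{c_2+i+1}{1}T_{i+1}+\cdots+\tbinom{c_1+c_2}{c_1-i}T_{c_1}\Bigr),
\]
from which the $T_0$-coefficient is $1+(-1)=0$ when $i=0$ and $0$ when $i>0$, exactly as you expect.

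Two points of imprecision in your write-up, both acknowledged by you and neither fatal to the argument. First, $\phi_{S_i}(e^\lambda)$ is $e^{S_i}$ on the nose (the comultiplication on $e^\lambda$ involves only distinct generators in each slot, so no binomial coefficient arises); the binomial $\tbinom{i+1}{1}$ appears only in the second term, from the multiplication $1^{(1)}\cdot 1^{(i)}$. Second, the raising sign for $\pi_\mu(1^{(i+1)}2^{(n-1)}3^{(c_1-i)}\otimes 1^{(n-1-i)}3^{(c_2+i)}\otimes 3^{(n-1-c)})$ is $(-1)^{n-1-i}$, not the $(-1)^{n-1}$ you wrote, because after $\phi_{S_i}$ only $n-1-i$ ones remain in row two; you state this only for $i=0$, which happens to be the single case where the $T_0$ cancellation must occur, so the conclusion still goes through. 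To close the gap fully you would need to justify (as the paper does by invoking the structure of [MS3, Lemma 4.2(2)]) that for $i>0$ the straightened expression is supported only on $T_j$ with $j\ge i$, and hence contributes nothing to $T_0$. That is precisely the part you flagged as "the main obstacle," and the paper's explicit raising formula settles it.
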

\begin{proof} Let $\xi=(n + c_1, n-1 + c_2, n-1-c_1-c_2)$ according to (\ref{mu}). For notational convenience, let $c=c_1+c_2$. 
	
We compute $\pi_{U_i} \circ \gamma_1 (e^\lambda)$ as a linear combination of the basis elements $Z_0, Z_1, \dots, Z_{c_1}$ of $K_\xi$.

First, using eq. (\ref{mapg}) and the definition of the map $\pi_{U_i}$ given in Definition \ref{phiS} we have 
\begin{align}\label{eq36}\pi_{U_i} \circ \gamma_1 (e^\lambda)&=\pi_{U_i}(e^{\lambda})+(-1)^n\pi_{U_i}(12^{(n-1)}\otimes1^{n-1}\otimes3^{n-1})\\\nonumber&=\pi_{U_i}(e^{\lambda})+ (-1)^n\pi_{\xi}(11^{(i)}2^{(n-1)}3^{(c_1-i)} \otimes 1^{(n-1-i)}3^{(c_2+i)} \otimes 3^{(n-1-c)})\\\nonumber&=Z_i+(-1)^n\tbinom{i+1}{1}\pi_{\xi}(1^{(i+1)}2^{(n-1)}3^{(c_1-i)} \otimes 1^{(n-1-i)}3^{(c_2+i)} \otimes 3^{(n-1-c)}),\end{align} where the binomial coefficient $\tbinom{i+1}{1}$ comes from the multiplication $11^{(i)}=\tbinom{i+1}{1}1^{(i+1)}$ in the divided power algebra $D$.
Next we apply Lemma \ref{lemglas}(2) to rows 1 and 2 of \[X:=\pi_{\xi}(1^{(i+1)}2^{(n-1)}3^{(c_1-i)} \otimes 1^{(n-1-i)}3^{(c_2+i)} \otimes 3^{(n-1-c)})\] to obtain 
	\begin{align*}X=(-1)^{n-1-i}\Big(&\pi_\xi(1^{(n)}2^{(i)}3^{(c_1-i)}\otimes 2^{(n-1-i)}3^{(c_2+i)}\otimes 3^{(n-1-c)} ) \\&+\tbinom{c_2+i+1}{1}\pi_\xi(1^{(n)}2^{(i+1)}3^{(c_1-i-1)}\otimes 2^{(n-2-i)}3^{(c_2+i-1)}\otimes 3^{(n-1-c)} \\&+ \cdots \\&+ \tbinom{c_1+c_2}{c_1-i}\pi_\xi(1^{(n)}2^{(c_1)}\otimes 2^{(n-1-c_1)}3^{(c_2+c_1)}\otimes 3^{(n-1-c)} ) \Big).\end{align*}
Thus we have \[X=(-1)^{n-1-i}\big( Z_i +\tbinom{c_2+i+1}{1}Z_{i+1} + \cdots + \tbinom{c_1+c_2}{c_1-i} Z_{c_1}\big)\]	and substituting this in eq. (\ref{eq36}) we obtain 
	
	\begin{equation}\label{eq362}\pi_{U_i} \circ \gamma_1 (e^\lambda)=Z_i+(-1)^{i+1}\tbinom{i+1}{1}\big( Z_i +\tbinom{c_2+i+1}{1}Z_{i+1} + \cdots + \tbinom{c_1+c_2}{c_1-i}Z_{c_1} \big).\end{equation}
	Now we see that if $i>0$, then the coefficient of $Z_0$ in the right hand side of eq. (\ref{eq362}) is equal to 0. Also, if $i=0$, then the coefficient of $Z_0$ in the right hand side of eq. (\ref{eq362}) is equal to $1+(-1)\tbinom{1}{1}=0$.
\end{proof}

We keep the previous notation, namely \begin{itemize}
	\item we have a partition $\xi  \in \mathrm{Par}(K_{(n,n-1)} \otimes  D_{n-1})$,
	\item we have the semistandard tableaux $U_i \in \mathrm{SST}_{\lambda} (\xi), i=0,1,\dots,c_1$, given by eq. (\ref{Si}),
	\item and we define $Z_i:=\pi_\xi(U_i)$, $i=0,1,\dots,c_1,$ which form a basis of the Weyl module $K_\xi$. 
\end{itemize}The next lemma concerns the action of $\gamma_2$ on $K_\xi$. \begin{lem}\label{lemma4} Let $\xi \in \mathrm{Par}(K_{(n,n-1)} \otimes  D_{n-1})$ such that $\xi \neq \lambda, \mu$. Then the coefficient of $Z_0 \in K_\xi$ in the expression of  $\pi_{U_0} \circ \gamma_2 (e^\lambda) \in K_\xi$ as a linear combination of semistandard basis elements is nonzero.
\end{lem}
\begin{proof}
	We know that $\xi$ is of the form \[\xi=(n+c_1, n-1+c_2, n-1-c_1-c_2)\] for some nonnegative integers $c_1, c_2, c_3$ satisfying $c_1 +c_2 +c_3 = n-1$ and $c_2 \in \{0,1\}$ according to (\ref{mu}). For notational convenience, let \[c=c_1+c_2.\]
	
	We want to  compute $\pi_{U_0} \circ \gamma_2 (e^\lambda)$ as a linear combination of the basis elements $Z_i$ of $K_\xi$ and in particular we want to determine the coefficient in this linear combination of the basis element $Z_0$.
	
	Using eq. (\ref{phie}) we have
	
	\begin{align}\label{371}
		\pi_{U_0}\circ \gamma_2(e^{\lambda})= &\pi_{U_0}(e^{\lambda}) \\\nonumber&+(-1)^{n}\pi_{U_0}(1^{(n)} \otimes 3^{(n-1)} \otimes 2^{(n-1)})\\\nonumber&  +    (-1)^n\pi_{U_0}(23^{(n-1)} \otimes 12^{(n-2)} \otimes 1^{(n-1)}).
	\end{align}
	Applying Definition \ref{phiS} for \begin{equation*} S=U_0=\begin{matrix*}[l]
			1^{(n)}3^{(c_1)} \\
			2^{(n-1)}3^{(c_2)} \\
			3^{(n-1-c)} \end{matrix*}\end{equation*} we obtain  
		\begin{align}
		&\label{lem3711}\pi_{U_0}(e^{\lambda})=Z_0,\\\label{lem372} &\pi_{U_0}(1^{(n)} \otimes 3^{(n-1)} \otimes 2^{(n-1)})=\pi_\xi(1^{(n)}2^{(c_1)} \otimes 2^{(c_2)}3^{(n-1)} \otimes 2^{(n-1-c)}),\\ &\label{lem373}\pi_{U_0}(23^{(n-1)} \otimes 12^{(n-2)} \otimes 1^{(n-1)})=\tbinom{1+c_2}{1}\pi_\xi(1^{(c_1)}23^{(n-1)} \otimes 1^{(1+c_2)}2^{(n-2)} \otimes 1^{(n-1-c)}).
	\end{align}
	Substituting eqs. (\ref{lem3711}) - (\ref{lem373}) in eq. (\ref{371}) we have 
	
	\begin{align}\label{371*}
		\pi_{U_0}\circ \gamma_2(e^{\lambda})= Z_0 + (-1)^{n}X +(-1)^{n}\tbinom{1+c_2}{1}Y,
	\end{align}
	where 
	\begin{align*} X:&=\pi_\xi(1^{(n)}2^{(c_1)} \otimes 2^{(c_2)}3^{(n-1)} \otimes 2^{(n-1-c)}), \\\
		Y:&=\pi_\xi(1^{(c_1)}23^{(n-1)} \otimes 1^{(1+c_2)}2^{(n-2)} \otimes 1^{(n-1-c)})
		\end{align*}
We want to compute the $X$ and $Y$ as linear combinations of the $Z_i$. To this end we use Lemma \ref{lemglas} repeatedly. 

   Raising the 2's from row 3 of $X$ to row 2 according to Lemma \ref{lemglas}(2), we obtain 
	\[X=(-1)^{n-1-c}\pi_\xi(1^{(n)}2^{(c_1)} \otimes 2^{(n-1-c_1)}3^{(c)} \otimes 3^{(n-1-c)})\] and therefore 
	
		\begin{equation}\label{lem374} X=(-1)^{n-1-c}\pi_\xi(e^{U_{c_1}})=(-1)^{n-1-c}Z_{c_1}.\end{equation}

	For $Y$ we first observe that $c \ge 1$ because if $c=0$, then $c_1=c_2=0$ and thus $\xi=(n,n-1,n-1)=\lambda$, which contradicts the hypothesis $\xi \neq \lambda$ of the Lemma. Therefore $n-2 \ge n-1-c$ and we may apply Lemma \ref{lemglas}(2) to raise the 1's from  row 3 of $Y$ to row 2 to obtain \begin{equation}\label{lem367}Y=(-1)^{n-1-c}Y_{1},\end{equation}
	where \begin{equation}\label{lem367b}Y_1:=\pi_\xi(1^{(c_1)}23^{(n-1)} \otimes 1^{(n-c_1)}2^{(c-1)} \otimes 2^{(n-1-c)}).\end{equation}
	In order to continue the computation of $Y_1$ as a linear combination of the $Z_i$ we distinguish two cases.
	
	\textbf{Case 1}. Suppose $c_1 \ge 1$.
	
	Raising the 1's from row 2 of $Y_1$ to row 1 yields \[Y_1=(-1)^{n-c_1}(Y_2+\tbinom{c}{1}Y_3),\]
	where
	\begin{align*}&Y_2:= \pi_\xi(1^{(n)}23^{(c_1-1)} \otimes 2^{(c-1)}3^{(n-c_1)} \otimes 2^{(n-1-c)}), \\& Y_3:=\pi_\xi(1^{(n)}3^{(c_1)} \otimes 2^{(c)}3^{(n-c_1-1)}\otimes 2^{(n-1-c)}).\end{align*}
	Raising the 2's from row 3 of $Y_2$ to row 2 yields \[Y_2= (-1)^{n-1-c}\pi_\xi(1^{(n)}23^{(c_1-1)} \otimes 2^{(n-2)}3^{(c_2+1)} \otimes 3^{(n-1-c)})=(-1)^{n-1-c}Z_1\]
	and likewise raising the 2's from row 3 of $Y_3$ yields \[Y_3= (-1)^{n-1-c}\pi_\xi(1^{(n)}3^{(c_1)} \otimes 2^{(n-1)}3^{(c_2)} \otimes 3^{(n-1-c)})=(-1)^{n-1-c}Z_0.\]
	By substituting the $Y_i$ in eq. (\ref{lem367}) and we find \[Y= (-1)^{n-c_1}(Z_1+\tbinom{c}{1}Z_0).\]
	Substituting this and eq. (\ref{lem374}) in eq. (\ref{371*}) we find the desired linear combination of semistandard tableaux  
	
	\begin{equation}\label{378}
		\pi_{U_0} \circ \gamma_2 (e^\lambda) = 	Z_0+(-1)^{c+1}Z_{c_1}+(-1)^{c_1}\tbinom{1+c_2}{1}(Z_1 +\tbinom{c}{1}Z_0).
	\end{equation}
	The coefficient of $Z_0$ in the right hand side of eq. (\ref{378}) is equal to $1+(-1)^{c_1}\tbinom{1+c_2}{1} \tbinom{c}{1}.$ This is clearly nonzero if $c_2=1$. If $c_2=0$, then the coefficient is equal to $1+(-1)^{c_1}c_1$. However, for $c_2=0$ we have $c_1 \neq 1$ because $\xi \neq \mu = (n+1,n-1,n-2)$ by hypothesis. Thus we see that the coefficient is nonzero when $c_2=0$. Remembering that $c_2\in \{0,1\}$ we have that the coefficient is nonzero in all cases.
	
	\textbf{Case 2}. Suppose $c_1 =0$. 
	
	In this case we have from eq. (\ref{lem367b}) \[Y_1=\pi_\xi(23^{(n-1)} \otimes 1^{(n)}2^{(c-1)} \otimes 2^{(n-1-c)})\] and we compute similarly to case 1. By raising the 1's from row 2 of $Y_1$ to row 1 we get \[Y_1=(-1)^n\tbinom{c}{1}\pi_\xi(1^{(n)}\otimes2^{(c)}3^{(n-1)}\otimes2^{(n-1-c)})\]
	and by raising in the last term the 2's from row 3 to row 2 we get
	\[Y_1=(-1)^{c+1}\tbinom{c}{1}Z_0.\]
	Substituting this and eq. (\ref{lem374}) in eq. (\ref{371*}) we find the desired linear combination of semistandard tableaux  
	 \begin{equation}\label{379}
		\pi_{U_0} \circ \gamma_2 (e^\lambda)=(1+(-1)^{c_2+1}+\tbinom{1+c_2}{1}\tbinom{c_2}{1})Z_0.
	\end{equation} If $c_2=1$, the coefficient of $Z_0$ in the right hand side of eq. (\ref{379}) is nonzero. We have $c_2 \neq 0$ because $\xi \neq \lambda = (n,n-1,n-1)$ by hypothesis of the Lemma. Remembering that $c_2\in \{0,1\}$ we have that the coefficient is nonzero in all cases.
\end{proof}

\subsection{ Proof of Theorem \ref{main1}}\label{proofmain}	
\begin{proof}	 From Lemma \ref{lemma1} and Lemma \ref{lemma2} we know that the multiplicity of each of $K_{\lambda}$ and $K_{\mu}$ in the cokernel of the map $\gamma_1 +\gamma_2 : D(\lambda) \oplus D(\lambda) \to D(\lambda)$ is equal to 1. 
	
From Corollary \ref{cor38} we know that every irreducible summand of $\coker(\gamma_1+\gamma_2)$ is a summand of $K_{(n,n-1)} \otimes  D_{n-1}$.

 Let us fix $\xi \in \mathrm{Par}(K_{(n,n-1)} \otimes  D_{n-1})$ such that $\xi \neq \lambda$ and $\xi \neq \mu.$ We intend to show that the multiplicity of $K_\xi$ in $D(\lambda)$ is equal to the multiplicity of  $K_\xi$ in $\Ima(\gamma_1)+\Ima(\gamma_2)$, or equivalently, that the vector spaces $\Hom_G(D(\lambda), K_\xi ) $ and $\Hom_G(\Ima(\gamma_1)+\Ima(\gamma_2), K_\xi ) $ have equal dimensions.
	
	We need to recall some notation. \begin{itemize} \item Let 
	$\mathrm{SST_\lambda}(\xi) = \{ U_0,  U_1, \dots, U_q\},$
	where $U_i$ is given by eq. (\ref{Si}). (With the notation of eq. (\ref{Si}) we have $q=c_1$.) \item For each $U_i$ we have the corresponding projection $\pi_{U_i} : D(\lambda) \to K_\xi, e^\lambda \mapsto \pi_\xi(e^{U_i})$, and we know that the $\pi_{U_0}, \dots, \pi_{U_q}$ form a basis of 
	$\Hom_G(D(\lambda), K_{\xi})$. So for the dimension of the vector space $\Hom_G(D(\lambda), K_{\xi})$ we have $\dim \Hom_G(D(\lambda), K_{\xi}) = q+1$. \end{itemize}

	The exact sequence $0 \to \Ima(\gamma_1) \to D(\lambda) \to  \coker(\gamma_1) \to 0$ yields the exact sequence \begin{equation}\label{exact}
		0 \to \Hom_G(\coker(\gamma_1), K_\xi) \to \Hom_G(D(\lambda), K_{\xi}) \to \Hom_G(\Ima(\gamma_1), K_\xi)  \to 0.
	\end{equation}
	From Lemma \ref{lem37} and Pieri's rule, it follows that $\dim\Hom_G(\coker(\gamma_1), K_\xi) =1$ and hence from (\ref{exact}) we conclude that $\dim\Hom_G(\Ima(\gamma_1), K_\xi) =q$. We have that $\Hom_G(\Ima(\gamma_1), K_\xi)$ is generated by the restrictions $\pi_{U_i}\big|_{\Ima{(\gamma_1)}}$, $i=0, \dots, q$, of the maps $\pi_{U_i}$ to $\Ima(\gamma_1).$ Consider the maps \[\pi_{U_0} \circ \gamma_1, \pi_{U_1} \circ \gamma_1 , \dots, \pi_{U_q} \circ \gamma_1 \in \Hom_G(D(\lambda), K_\xi).\] It is clear that if $a_0 \pi_{U_0} \circ \gamma_1 + \cdots + a_q \pi_{U_q} \circ \gamma_1 =0,$
	where $a_i \in \mathbb{K}$, then 
	\[a_0 \pi_{U_0}\big|_{\Ima{(\gamma_1)}} + \cdots + a_q \pi_{U_q}\big|_{\Ima{(\gamma_1)}} =0.\]
	Thus, for the subspace \[W:=\mathrm{span}\{\pi_{U_0} \circ \gamma_1, \pi_{U_1} \circ \gamma_1 , \dots, \pi_{U_q} \circ \gamma_1 \} \] 
	of $\Hom_G(D(\lambda), K_\xi ) $ we have 
	\[ \dim W \ge \dim\Hom_G(\Ima(\gamma_1), K_\xi) =q. \]

	We claim that the subspace \[\mathrm{span}\{\pi_{U_0} \circ \gamma_2, \pi_{U_0} \circ \gamma_1, \pi_{U_1} \circ \gamma_1 , \dots, \pi_{U_q} \circ \gamma_1 \} \] 
	of $\Hom_G(D(\lambda), K_\xi ) $ has dimension  $q+1$. Indeed, to prove this it suffices to show that $ \pi_{U_0} \circ \gamma_2 \notin W. $ Suppose $\pi_{U_0} \circ \gamma_2 = a_0\pi_{U_0} \circ \gamma_1 + \cdots + a_q\pi_{U_q} \circ \gamma_1$, where $a_i \in \mathbb{K}$. Evaluating at $e^\lambda$ we have \[\pi_{U_0} \circ \gamma_2(e^\lambda) = a_0\pi_{U_0} \circ \gamma_1(e^\lambda) + \cdots + a_q\pi_{U_q} \circ \gamma_1(e^\lambda). \] According to Lemma \ref{lemma4}, the coefficient of the semistandard basis element $Z_0=\pi_\xi(e^{U_0})$ in the left  hand side is nonzero, and according to Lemma \ref{lemma3} the coefficient of $Z_0$ in the right hand side is zero. Thus $ \pi_{U_0} \circ \gamma_2 \notin W $ as desired.
	
	It follows from the above claim that there is a basis of $\Hom_G(D(\lambda), K_\xi ) $ consisting of a subset of the elements 
	\begin{equation}\label{maps}\pi_{U_0} \circ \gamma_2, \ \pi_{U_0} \circ \gamma_1, \ \pi_{U_1} \circ \gamma_1 , \ \dots, \ \pi_{U_q} \circ \gamma_1.\end{equation}
	Since every map in (\ref{maps}) factors through a subspace of $\Ima(\gamma_1) + \Ima(\gamma_2)$, we conclude that the multiplicity of $K_\xi$ in $\Ima(\gamma_1) + \Ima(\gamma_2) \subseteq D(\lambda)$ is at least $\dim \Hom_G(D(\lambda), K_\xi )$. Thus this multiplicity is equal to $\dim \Hom_G(D(\lambda), K_\xi )$. Consequently, the multiplicity of $K_\mu$ in $\coker(\gamma_1 + \gamma_2)$ is equal to zero.
\end{proof}

\subsection{The map $\gamma_3$}\label{sec35} 

\begin{defn}\label{mapg3} Define $\gamma_3  \in \Hom _G(D(\nu), D(\lambda))$ by \[\gamma_3 := \phi_{Q(1)}+ (-1)^n\phi_{Q(2)},\] where the tableaux $Q(i) \in \mathrm{RSST}_\nu(\lambda) $ are the following
	\[ Q(1):=\begin{matrix*}[l]
		1^{(n)}  \\
		23^{(n-2)} \\
		2^{(n-1)} \end{matrix*}, \; Q(2):=\begin{matrix*}[l]
		2^{(n)}  \\
		13^{(n-2)} \\
		1^{(n-1)} \end{matrix*}. \]	
\end{defn}
\begin{lem}We have \begin{align}\label{gamma3e}&\gamma_3(e^{\nu})= 1^{(n)} \otimes 23^{(n-2)} \otimes 2^{(n-1)} +(-1)^n 2^{(n)} \otimes 13^{(n-2)}\otimes1^{(n-1)}.\end{align} \end{lem}
\begin{proof} This is clear from the previous definition and Definition \ref{phiS}(1).
\end{proof}

The reason for considering the map $\gamma_3$ will become apparent in Section 5.1. Roughly speaking, the image of $\gamma_3$ corresponds to a particular relation of $\text{Lie}_n(m)$, namely to relation (R5) of Lemma \ref{R5}.

At the beginning of the proof of Lemma \ref{lemma2}, we observed the following.
\begin{rem}\label{twoss}
	There are exactly two semistandard tableaux of shape $\mu$ and weight $\lambda$, 
	
	\[ R_1:=\begin{matrix*}[l]
		1^{(n)}3  \\
		2^{(n-1)} \\
		3^{(n-2)} \end{matrix*}, \; R_2:=\begin{matrix*}[l]
		1^{(n)}2  \\
		2^{(n-2)}3 \\
		3^{(n-2)} \end{matrix*}.  \]
	According to Section 2.3, the corresponding projections are 	\begin{align*} &\pi_{R_1}:D(\lambda) \to K_\mu, e^{\lambda} \mapsto \pi_\mu(1^{(n)}3\otimes2^{(n-1)} \otimes 3^{(n-2)}), \\&\pi_{R_2}:D(\lambda) \to K_\mu, e^{\lambda} \mapsto \pi_\mu(1^{(n)}2\otimes2^{(n-2)}3 \otimes 3^{(n-2)}).\end{align*}
\end{rem}

\begin{lem}\label{gamma3}With the above notation, each of the following compositions of $G$-maps is the zero map
\begin{enumerate}
	\item $D(\nu) \xrightarrow{\gamma_3} D(\lambda) \xrightarrow{\pi_\lambda} K_\lambda$,
	\item $D(\nu) \xrightarrow{\gamma_3} D(\lambda) \xrightarrow{\pi_{R_1}} K_\mu$,
	\item $D(\nu) \xrightarrow{\gamma_3} D(\lambda) \xrightarrow{\pi_{R_2}} K_\mu$.
\end{enumerate}
\end{lem}
\begin{proof}
(1) Using eq. (\ref{gamma3e}) we have \[\pi_\lambda \circ \gamma_3 (e^\nu)=\pi_\lambda\big(1^{(n)} \otimes 23^{(n-2)} \otimes 2^{(n-1)}\big) +(-1)^n \pi_\lambda\big(2^{(n)} \otimes 13^{(n-2)}\otimes 1^{(n-1)}\big).\]
The number of 2's in rows 2 and 3 of $\pi_\lambda\big(1^{(n)} \otimes 23^{(n-2)} \otimes 2^{(n-1)}\big)$ is equal to $1+n-1=n$ and the length of the second part of the partition $\lambda$ is equal to $n-1$. Since $n-1 < n$, we conclude from Lemma \ref{lemglas}(1) that \[\pi_\lambda\big(1^{(n)} \otimes 23^{(n-2)} \otimes 2^{(n-1)}\big)=0.\] In a similar manner we also have  \[\pi_\lambda\big(2^{(n)} \otimes 13^{(n-2)}\otimes 1^{(n-1)}\big)=0.\] Hence $\pi_\lambda \circ \gamma_3 (e^\nu)=0.$ Since $e^{\nu}$ generates $D(\nu)$ as a $G$-module (Remark \ref{cyclic}) and $\pi_\lambda \circ \gamma_3$ is a map of $G$-modules, we conclude that $\pi_\lambda \circ \gamma_3=0.$

(2) Using eq. (\ref{gamma3e}) we have \begin{equation}\label{newlemma1}\pi_{R_1} \circ \gamma_3 (e^\nu)=\pi_{R_1}\big(1^{(n)} \otimes 23^{(n-2)} \otimes 2^{(n-1)}\big) +(-1)^n \pi_{R_1}\big(2^{(n)} \otimes 13^{(n-2)}\otimes1^{(n-1)}\big).\end{equation} We have $\pi_{R_1} = \pi_\mu \circ \phi_{R_1}$ according to Definition \ref{phiS}(2). 

We compute the first summand in the right hand side of eq. (\ref{newlemma1}). Using the definition of $\phi_{R_1} $ (i.e. Definition \ref{phiS}(1)),  we obtain 
\[
\pi_{R_1} \big(1^{(n)} \otimes 23^{(n-2)} \otimes 2^{(n-1)}\big)=\pi_{\mu} \big(1^{(n)}2 \otimes 23^{(n-2)} \otimes 2^{(n-2)}\big).
\]
Applying Lemma \ref{lemglas}(2) to the right hand side of the above equation for rows 2 and 3, we obtain \[ \pi_{\mu} \big(1^{(n)}2 \otimes 23^{(n-2)} \otimes 2^{(n-2)}\big)=  (-1)^{(n-2)}\pi_{\mu} \big(1^{(n)}2 \otimes 2^{(n-1)} \otimes 3^{(n-2)}\big).\] Hence 
\begin{equation}\label{newlemma2}
	\pi_{R_1} \big(1^{(n)} \otimes 23^{(n-2)} \otimes 2^{(n-1)}\big)=(-1)^{(n-2)}\pi_{\mu} \big(1^{(n)}2 \otimes 2^{(n-1)} \otimes 3^{(n-2)}\big)
\end{equation}

We compute now the second summand in the right hand side of eq. (\ref{newlemma1}). The definition of $\phi_{R_1}$ yields \[ \pi_{R_1}\big(2^{(n)} \otimes 13^{(n-2)}\otimes1^{(n-1)}\big)=\pi_{\mu}\big(12^{(n)} \otimes 13^{(n-2)}\otimes1^{(n-2)}\big).\]
Applying Lemma \ref{lemglas}(2) to the right hand side of the above equation for rows 2 and 3, we obtain
\[ \pi_{R_1}\big(2^{(n)} \otimes 13^{(n-2)}\otimes1^{(n-1)}\big)=(-1)^{n-2}\pi_{\mu}\big(12^{(n)} \otimes 1^{(n-1)}\otimes3^{(n-2)}\big).\]
Applying the same lemma to the right hand side of the above equation for rows 1 and 2 yields \[(-1)^{n-2}\pi_{\mu}\big(12^{(n)} \otimes 1^{(n-1)}\otimes3^{(n-2)}\big)=(-1)^{n-2}(-1)^{n-1}\pi_{\mu}\big(1^{(n)}2 \otimes 2^{(n-1)}\otimes3^{(n-2)}\big).\] Hence
\begin{equation}\label{newlemma3}
	\pi_{R_1} \big(2^{(n)} \otimes 13^{(n-2)} \otimes 1^{(n-1)}\big)=-\pi_{\mu} \big(1^{(n)}2 \otimes 2^{(n-1)} \otimes 3^{(n-2)}\big).
\end{equation}

Now we substitute (\ref{newlemma2}) and (\ref{newlemma3}) into (\ref{newlemma1}) to obtain
\begin{align*}\pi_{R_1}& \circ \gamma_3 (e^\nu)\\&=(-1)^{(n-2)}\pi_{\mu} \big(1^{(n)}2 \otimes 2^{(n-1)} \otimes 3^{(n-2)}\big) - (-1)^n \pi_{\mu}\big(1^{(n)}2 \otimes 2^{(n-1)} \otimes 3^{(n-2)}\big)\\&=0.\end{align*}
Hence $\pi_{R_1} \circ \gamma_3=0.$

(3) This computation is similar to (2) but has two extra steps. Using eq. (\ref{gamma3e}) we have \begin{equation}\label{newlemma4}\pi_{R_2} \circ \gamma_3 (e^\nu)=\pi_{R_2}\big(1^{(n)} \otimes 23^{(n-2)} \otimes 2^{(n-1)}\big) +(-1)^n \pi_{R_2}\big(2^{(n)} \otimes 13^{(n-2)}\otimes1^{(n-1)}\big).\end{equation}

We compute the first summand in the right hand side of eq. (\ref{newlemma4}). Using the definition of $\phi_{R_2} $, see (\ref{phiS}),  we obtain 
\begin{align}\label{newlemma5}
	\pi_{R_2}\big(1^{(n)} \otimes 23^{(n-2)} \otimes 2^{(n-1)}\big)= &\pi_{\mu}\big(1^{(n)}2 \otimes 23^{(n-2)} \otimes 2^{(n-2)}\big) \\\nonumber&+\tbinom{2}{1}\pi_{\mu}\big(1^{(n)}3 \otimes 2^{(2)}3^{(n-3)} \otimes 2^{(n-2)}\big).
\end{align}
Applying Lemma \ref{lemglas}(2) to $\pi_{\mu}\big(1^{(n)}2 \otimes 23^{(n-2)} \otimes 2^{(n-2)}\big)$ for rows 2 and 3, we have \[ \pi_{\mu}\big(1^{(n)}2 \otimes 23^{(n-2)} \otimes 2^{(n-2)}\big) = (-1)^{n-2}\pi_{\mu}\big(1^{(n)}2 \otimes 2^{(n-1)} \otimes 3^{(n-2)}\big).\]
The number of 2's in rows 2 and 3 of $\pi_\mu\big(1^{(n)}3 \otimes 2^{(2)}3^{(n-3)} \otimes 2^{(n-2)}\big)$ is equal to $2+n-2=n$ and the length of the second part of the partition $\mu$ is equal to $n-1$. Since $n-1 < n$, we conclude from Lemma \ref{lemglas}(1) that \[\pi_\mu\big(1^{(n)}3 \otimes 2^{(2)}3^{(n-3)} \otimes 2^{(n-2)}\big)=0.\] Substituting in (\ref{newlemma5}) we get \begin{equation}\label{newlemma55}
	\pi_{R_2}\big(1^{(n)} \otimes 23^{(n-2)} \otimes 2^{(n-1)}\big)=(-1)^{n-2}\pi_{\mu}\big(1^{(n)}2 \otimes 2^{(n-1)} \otimes 3^{(n-2)}\big).
\end{equation}

Now we compute the second summand in the right hand side of eq. (\ref{newlemma4}). The definition of $\phi_{R_2}$ yields 
\begin{align}\label{newlemma6}
	\pi_{R_2}\big(2^{(n)} \otimes 13^{(n-2)} \otimes 1^{(n-1)}\big)= &\pi_{\mu}\big(12^{(n)} \otimes 13^{(n-2)} \otimes 1^{(n-2)}\big) \\\nonumber&+\tbinom{2}{1}\pi_{\mu}\big(2^{(n)}3 \otimes 1^{(2)}3^{(n-3)} \otimes 1^{(n-2)}\big).
\end{align}
Applying Lemma \ref{lemglas}(2) to $\pi_{\mu}\big(12^{(n)} \otimes 13^{(n-2)} \otimes 1^{(n-2)}\big)$ for rows 2 and 3, we have \[ \pi_{\mu}\big(12^{(n)} \otimes 13^{(n-2)} \otimes 1^{(n-2)}\big)=(-1)^{n-2}  \pi_{\mu}\big(12^{(n)} \otimes 1^{(n-1)} \otimes 3^{(n-2)}\big)\] 
and applying the same lemma to the resulting term $\pi_{\mu}\big(12^{(n)} \otimes 1^{(n-1)} \otimes 3^{(n-2)}\big)$ for rows 1 and 2, we have \[ \pi_{\mu}\big(12^{(n)} \otimes 1^{(n-1)} \otimes 3^{(n-2)}\big) =(-1)^{n-1}\pi_{\mu}\big(1^{(n)}2 \otimes 2^{(n-1)} \otimes 3^{(n-2)}\big).\] Hence substituting we obtain \begin{align}\label{newlemma7}
\pi_{\mu}\big(12^{(n)} \otimes 13^{(n-2)} \otimes 1^{(n-2)}\big)&=(-1)^{n-2}(-1)^{n-1}\pi_{\mu}\big(1^{(n)}2 \otimes 2^{(n-1)} \otimes 3^{(n-2)}\big)\\\nonumber&= - \pi_{\mu}\big(1^{(n)}2 \otimes 2^{(n-1)} \otimes 3^{(n-2)}\big).
\end{align}
The number of 1's in rows 2 and 3 of $\pi_\mu\big(2^{(n)}3 \otimes 1^{(2)}3^{(n-3)} \otimes 1^{(n-2)}\big)$ is equal to $2+n-2=n$ and the length of the second part of the partition $\mu$ is equal to $n-1$. Since $n-1 < n$, we conclude from Lemma \ref{lemglas}(1) that \begin{equation}\label{newlemma8}\pi_\mu\big(2^{(n)}3 \otimes 1^{(2)}3^{(n-3)} \otimes 1^{(n-2)}\big)=0.\end{equation}

Now substituting (\ref{newlemma7}) and (\ref{newlemma8}) into (\ref{newlemma6}) we get 

\begin{equation}\label{newlemma9}
	\pi_{R_2}\big(2^{(n)} \otimes 13^{(n-2)} \otimes 1^{(n-1)}\big)= -\pi_{\mu}\big(1^{(n)}2 \otimes 2^{(n-1)} \otimes 3^{(n-2)}\big)
\end{equation}
and substituting (\ref{newlemma55}) and (\ref{newlemma9}) into (\ref{newlemma4}) we get
\begin{align*}\pi_{R_2} \circ \gamma_3 (e^\nu)&=(-1)^{n-2}\pi_{\mu}\big(1^{(n)}2 \otimes 2^{(n-1)} \otimes 3^{(n-2)}\big) \\& \ \ \ -(-1)^n \pi_{\mu}\big(1^{(n)}2 \otimes 2^{(n-1)} \otimes 3^{(n-2)}\big)\\& =0.\end{align*}
Hence $\pi_{R_2} \circ \gamma_3=0.$
\end{proof}
\subsection{Main result for Weyl modules} Recall that we have the partitions $\lambda, \mu$ and $\nu $ of $3n-2$ defined at the beginning of Section \ref{sec3}. Also we have the maps of $G$-modules \begin{align*}\gamma_i&: D(\lambda) \to D(\lambda) \ (i=1,2), \\ \gamma_3&: D(\nu) \to D(\lambda)\end{align*}
given in Definition \ref{mapg} and Definition \ref{mapg3} respectively. We defined the map of $G$-modules $\gamma_1 + \gamma_2 + \gamma_3 : D(\lambda) \oplus D(\lambda) \oplus D(\nu) \to D(\lambda)$ by $(\gamma_1 + \gamma_2 + \gamma_3) (x,y,z) = \gamma_1(x) + \gamma_2(y) + \gamma_3(z)$, where $x,y \in D(\lambda)$  and $z \in D(\nu)$. It follows that $\Ima (\gamma_1 + \gamma_2 + \gamma_3)= \Ima(\gamma_1 + \gamma_2)+ \Ima(\gamma_3)=$$\Ima(\gamma_1) + \Ima(\gamma_2)+ \Ima(\gamma_3)$. The main result of this paper for Weyl modules is the following.
\begin{thm}\label{main2}
	Let $N \ge 3n-2$. Then, the cokernel of the map \[\gamma_1 + \gamma_2 + \gamma_3 : D(\lambda) \oplus D(\lambda) \oplus D(\nu) \to D(\lambda) \] is isomorphic to $K_{\lambda} \oplus K_{\mu}$ as $G$-modules.
\end{thm}
\begin{proof} From Theorem \ref{main1} we know that the cokernel of the map \[ \gamma_1 + \gamma_2: D(\lambda) \oplus D(\lambda) \to D(\lambda) \]
	is isomorphic as a $G$-module to $K_\lambda \oplus K_\mu$. Hence it suffices to show that each of the irreducibles $K_{\lambda}$ and $K_{\mu}$ has multiplicity equal to zero in the image $\Ima (\gamma_3)$ of the map \[\gamma_3 : D(\nu) \to D(\lambda). \]
	
	We know that a basis of $\Hom_G(D(\lambda), K_\lambda)$ is the set $\{ \pi_\lambda\}.$ Thus from Lemma \ref{gamma3}(1) we conclude that the multiplicity of $K_\lambda$ in $\Ima(\gamma_3)$ is equal to zero.
	
	We know from Remark \ref{twoss} and Proposition \ref{wbasis} that a basis of $\Hom_G(D(\lambda), K_\mu)$ is the set \[\{ \pi_{R_1}, \pi_{R_2}\}.\]
	Thus from Lemma \ref{gamma3}(2), (3) we conclude that the multiplicity of $K_\mu$ in $\Ima(\gamma_3)$ is equal to zero.
\end{proof}
\begin{rem}\label{rem46}According to Theorem \ref{main1} and Theorem \ref{main2}, the cokernels of the maps $\gamma_1 + \gamma_2 : D(\lambda) \oplus D(\lambda) \to D(\lambda)$ and $\gamma_1 + \gamma_2 + \gamma_3 : D(\lambda) \oplus D(\lambda) \oplus D(\nu) \to D(\lambda)$ are isomorphic $G$-modules. This implies that $\Ima(\gamma_3)$ is contained in $\Ima(\gamma_1+\gamma_2)$ because $\Ima(\gamma_1+\gamma_2) \subseteq \Ima(\gamma_1+\gamma_2+\gamma_3)$.
	\end{rem}

\section{LAnKes and Specht modules}
Let $m=3n-2$ and $n \ge 2$.  The purpose of this section is to show how Theorem \ref{mainspecht} follows from Theorem \ref{main2}. We will need a particular presentation of $\text{Lie}_n(m)$ given in Lemma \ref{presentationlanke} below.
\subsection{A presentation of $\text{Lie}_n(m)$} 

First, we will describe a presentation of $\text{Lie}_n(m)$ (see Lemma \ref{genrel1} below) in the spirit of Section 2.2 of \cite{FHSW}. We recall that an $n$-bracketed permutation on $[m]$ is an $n$-bracketed word on $[m]$ such that each $a \in [m]$ appears exactly once. The symmetric group $\mathfrak{S}_m$ acts naturally on $n$-bracketed permutations by replacing each $i$ of the bracketed permutation by $\sigma(i)$.

\begin{defn} Let $W=W_{n,3}$ be the vector space generated by all possible $n$-bracketed permutations on $[m]$ subject only to skew commutativity of the bracket given in Definition  \ref{lanke}(1) (but not to the generalized Jacobi identity  (\ref{GJI})).
\end{defn}
It is clear that $W$ is an $\mathfrak{S}_m$-module.

Here the number of brackets is $k=3$. Hence up to skew commutativity there are two types of generators of $W$ consisting of bracketed permutations
\begin{align}\tag{G1} &[[[{x_1},\dots ,{x_n} ],{y_{1}},\dots, y_{n-1}], z_{1},\dots ,z_{n-1}  ], \\&\tag{G2}[[{x_1},\dots ,{x_n} ],[{y_{1}},\dots, y_{n}], z_{1},\dots ,z_{n-2}  ],
\end{align}
for all $x_i, y_j, z_k \in [m]$.
\begin{lem}\label{Wgen} The subspace of $W$ consisting of the relations satisfied by the elements \textup{(G1)} and \textup{(G2)} is generated by the following relations
	\begin{align}
		\label{51}\textup{(G1)}&- \sign(\sigma)[[[{x_{\sigma(1)}},\dots ,{x_{\sigma(n)}} ],{y_{1}},\dots, y_{n-1}], z_{1},\dots ,z_{n-1}], \ \sigma \in \mathfrak{S}_n, \\\label{52}\textup{(G1)}&-\sign(\tau)[[[{x_{1}},\dots ,{x_{n}} ],{y_{\tau(1)}},\dots, y_{\tau(n-1)}], z_{1},\dots ,z_{n-1}], \ \tau \in \mathfrak{S}_{n-1},\\\label{53}\textup{(G1)}&-\sign(\tau)[[[{x_{1}},\dots ,{x_{n}} ],{y_{1}},\dots, y_{n-1}], z_{\tau(1)},\dots ,z_{\tau(n-1)}], \ \tau \in \mathfrak{S}_{n-1},
	\end{align}
	and
	\begin{align}
		\label{54}\textup{(G2)}&-\sign(\sigma)[[{x_{\sigma(1)}},\dots ,{x_{\sigma(n)}} ],[{y_{1}},\dots, y_{n}],  z_{1},\dots ,z_{n-2}  ],  \ \sigma \in \mathfrak{S}_n,
		\\\label{55}\textup{(G2)}&-\sign(\sigma)[[{x_1},\dots ,{x_n} ],[{y_{\sigma(1)}},\dots, y_{\sigma(n)}], z_{1},\dots ,z_{n-2}  ], \ \sigma \in \mathfrak{S}_n,\\
		\label{56}\textup{(G2)}&-\sign(\tau)[[{x_1},\dots ,{x_n} ],[{y_{1}},\dots, y_{n}], z_{\tau(1)},\dots ,z_{\tau(n-2)}  ], \ \tau \in \mathfrak{S}_{n-2},
		\\\label{57}\textup{(G2)}&+ [[{y_1},\dots ,{y_n} ],[{x_{1}},\dots, x_{n}], z_{1},\dots ,z_{n-2} ].
	\end{align}
	\end{lem}
\begin{proof}
	This follows immediately from the definition of $W$ since all relations in $W$ are consequences of the identities $[x_1, x_2, \dots, x_n]=\sign(\sigma)[x_{\sigma(1)}, x_{\sigma(2)}, \dots, x_{\sigma(n)}$] , where $x_1, \dots, x_n \in [m]$ are distinct elements and  $\sigma \in \mathfrak{S}_n$.
\end{proof}

Remarks on notation: (1) According to Definition \ref{lanke}, the vector space $\text{Lie}_n(m)$ is a quotient of $W$ (by the subspace generated by relations given by the generalized Jacobi identity). We will use the same symbol for a bracketed permutation in $W$ and the corresponding coset in $\text{Lie}_n(m)$ if there is no danger of confusion. Thus, when considering elements of $\text{Lie}_n(m)$, we may refer to (G1) and (G2) as generators of $\text{Lie}_n(m)$. (2) For a sequence $x_1, \dots, x_q$ of elements of $[m]$ and for $i \in \{1,\dots, q\}$, we denote by $x_1, \dots, \widehat{x_i}, \dots, x_q$ the sequence obtained by omitting the term $x_i$.

\begin{lem}\label{genrel1}
	Let $\spn (R1,R2,R3)$ be the subspace of $W$ spanned by the elements $\textup{(R1), (R2), (R3)}$  defined as follows   \begin{align}\tag{R1} \textup{(G1)}&- \sum_{i=1}^{n}(-1)^{i-1}[[[x_i,{y_{1}},\dots, y_{n-1}],{x_1},\dots ,\widehat{x_i}, \dots, x_n ],z_{1},\dots ,z_{n-1}],\\
		\tag{R2} \textup{(G1)}&- [[[{x_1},\dots ,{x_n} ],{z_{1}},\dots, z_{n-1}], y_{1},\dots ,y_{n-1}]
		\\\nonumber&-\sum_{i=1}^{n-1}(-1)^{i-1}[[{x_1},\dots ,{x_{n}}],[y_{i},z_1, \dots, z_{n-1} ],{y_{1}},\dots, \widehat{y_i},\dots ,y_{n-1}],\\\tag{R3} \textup{(G2)}&-\sum_{i=1}^{n}(-1)^{i}[[[y_1,\dots,y_n],x_i , z_1,\dots, z_{n-2}],x_1,\dots,\widehat{x_i},\dots, x_n],
	\end{align}
for all $x_i, y_j, z_k \in [m]$. Then the $\mathfrak{S_m}$-modules  $\text{Lie}_n(m)$ and $W/\spn (R1,R2,R3)$ are isomorphic. 
\end{lem}
\begin{proof} The relations among the generators (G1) and (G2) of $\text{Lie}_n(m)$ are consequences of skew commutativity of the bracket and the generalized Jacobi identity, cf. Definition \ref{lanke}. Up to skew commutativity of the bracket, this identity may be applied in exactly two ways to a generator of type (G1); we may exchange the $y_1, \dots, y_{n-1}$ with $n-1$ of the $x_1, \dots, x_n$ (keeping the $z_k$ fixed), or we may exchange the $z_1, \dots, z_{n-1}$ with $n-1$ of the $[x_1, \dots, x_n], y_1, \dots, y_{n-1}$. In the first case, we obtain \[ \textup{(G1)} = \sum_{i=1}^{n}[[{x_1},\dots ,{x_{i-1}}, [x_i,{y_{1}},\dots, y_{n-1}], x_{i+1}, \dots, x_n ],z_{1},\dots ,z_{n-1}]. \] From this and skew commutativity of the bracket (we move the ${x_1},\dots ,{x_{i-1}}$ to the right of the element $[x_i,{y_{1}},\dots, y_{n-1}]$), we obtain \textup{(R1)}. In the second case, we obtain \textup{(R2)} in a similar manner.
	
	Up to skew commutativity of the bracket, the generalized Jacobi identity may be applied in exactly one way to a generator of type (G2), by exchanging the $[y_1, \dots, y_n], z_1, \dots, z_{n-2}$ with $n-1$ of the $x_1, \dots, x_n$. Thus we obtain (R3).
\end{proof}

In the next lemma we prove two more relations of $\text{Lie}_n(m)$.

\begin{lem}\label{R5} Define the elements of $W$
	\begin{align}\tag{R4}\textup{(G1)}&-[[[{x_1},\dots ,{x_n} ],{z_{1}},\dots, z_{n-1}], y_{1},\dots ,y_{n-1}]\\\nonumber&-\sum_{i=1}^{n-1}(-1)^{i-1}\sum_{j=1}^n(-1)^j[[[y_{i},z_1, \dots, z_{n-1}],x_j,y_1,\dots,\widehat{y}_{i}, \dots, y_{n-1}],\\&\nonumber x_1, \dots, \widehat{x}_j, \dots, x_n],\\\tag{R5}
		&\sum_{i=1}^{n}(-1)^{i}[[[x_1,\dots,x_n],y_i, z_1,\dots,z_{n-2}],y_1,\dots, \widehat{y_{i}}, \dots, y_n] \\\nonumber&+ \sum_{i=1}^{n}(-1)^{i}[[[y_1,\dots,y_n],x_i, z_1,\dots,z_{n-2}],x_1,\dots, \widehat{x_{i}}, \dots, x_n],
	\end{align}
	for all $x_i, y_j, z_k \in [m]$. The images of these in $\text{Lie}_n(m)$ are equal to 0.
\end{lem}
\begin{proof}
	Note that  by skew commutativity of the bracket, every summand in the sum of (R2) of Lemma \ref{genrel1} is up to sign  a generator of type (G2). We substitute (R3) in (R2) to obtain (R4).
	
	We have the element (R3) of Lemma \ref{genrel1} and the corresponding element obtained by exchanging the $x$'s and $y$'s. By adding these and using skew commutativity of the bracket we obtain (R5).\end{proof}

\begin{defn} Let $W(1)$ be the $\mathfrak{S}_m$-submodule of $W$ generated by the elements (G1).
\end{defn}

We note that the elements (R1), (R4) and (R5) of $W$ involve only generators of type (G1). Hence the subspace $\spn(R1,R4,R5)$ spanned by these is contained in $W(1)$. Also it is clear that $\spn(R1,R4,R5)$ is an $\mathfrak{S}_m$-submodule of $W(1)$.

\begin{defn}\label{liebar}
	Let $\overline{\text{Lie}}_n(m)$ be the $\mathfrak{S}_m$-module $W(1)/\spn(R1, R4, R5)$.\end{defn}
\begin{lem}\label{liebarsur} The inclusion map $W(1) \subseteq W$ induces a surjective map of $\mathfrak{S}_m$-modules \[\overline{\textup{Lie}}_n(m) \to \textup{Lie}_n(m).\]
\end{lem}
\begin{proof}
From Lemma \ref{genrel1} it follows that the image of (R1) in $\text{Lie}_n(m)$  is equal to 0 and from Lemma \ref{R5} it follows that the images of (R4) and (R5) in $\text{Lie}_n(m)$ are equal to 0. Hence the inclusion map $W(1) \subseteq W$ induces a map $W(1)/\spn(R1, R4, R5) \to \text{Lie}_n(m)$.

We know that $\text{Lie}_n(m)$ is generated by the images of (G1) and (G2). Hence by Lemma \ref{genrel1}(R3), $\text{Lie}_n(m)$ is generated by the images of (G1). So the map $W(1)/\spn(R1, R4, R5) \to \text{Lie}_n(m)$ is surjective.
\end{proof}

We will show now that the surjective map $\overline{\text{Lie}}_n(m) \to \text{Lie}_n(m)$ of the previous lemma is an isomorphism. For this we need a particular relation in $\overline{\text{Lie}}_n(m)$ (Lemma \ref{finalrel} below). For the proof we will make use of a remark on the  sign of permutations that follows.
\begin{rem}\label{sign} Recall that for a finite sequence $u_1, \dots, u_k $ of distinct positive integers, the inversion number $inv(u_1,...,u_k)$ is the number of pairs $(u_i, u_j)$ such that $i<j$ and $u_i >u_j$. For a permutation $\sigma \in \mathfrak{S}_n$ we have $\sign(\sigma)= (-1)^{inv(\sigma(1), \dots,  \sigma(n))}$. Now let $i \in \{1,...,n\}$. We may consider the inversion number $inv(\sigma(1), \dots, \widehat{\sigma(i)}, \dots, \sigma(n))$ of the sequence obtained from $\sigma(1),  \dots, \sigma(n)$ by deleting the term $\sigma(i)$. We have \begin{equation}\label{inv}
(-1)^i(-1)^{inv(\sigma(1), \dots, \widehat{\sigma(i)}, \dots, \sigma(n))}=(-1)^{\sigma(i)}\sign(\sigma).
	\end{equation}
\begin{proof} Working modulo 2 we have
	\begin{align*}
\sign(\sigma) &\equiv inv(\sigma(1), \dots, \sigma(n))\\&\equiv inv(\sigma(i), \sigma(1), \dots, \widehat{\sigma(i)}, \dots, \sigma(n)) +(i-1)
\\&\equiv inv(\sigma(i), 1, \dots, \widehat{\sigma(i)}, \dots, n) + inv(\sigma(1), \dots, \widehat{\sigma(i)}, \dots, \sigma(n))+(i-1)\\&\equiv inv(1, \dots, n) + (\sigma(i)-1) + inv(\sigma(1), \dots, \widehat{\sigma(i)}, \dots, \sigma(n))+(i-1)	
\\&\equiv \sigma(i) + inv(\sigma(1), \dots, \widehat{\sigma(i)}, \dots, \sigma(n))+i.\end{align*} In the second and fourth congruences we used the fact $inv(u) \equiv inv(u')+1$ if the sequence $u'$ is obtained from $u$ by transposing two adjacent elements. In the third we used that $inv(u_1,u_2,\dots,u_k) \equiv inv(u_1,v_2,\dots,v_k) + inv(u_2,\dots,u_k)$, where $v_2 < \dots < v_k$ is the increasing sequence obtained from $u_2, \dots, u_k$ by rearranging the terms in increasing order.
\end{proof}
	\end{rem}

\begin{lem}\label{finalrel}
In $W(1)$  we have the relations 
\begin{align*}&\sum_{i=1}^n(-1)^i[[[y_1,\dots,y_n],x_i,z_1,\dots, z_{n-2}],x_1,\dots, \widehat{x_i}, \dots, x_n]\\&-\sign(\sigma)\sum_{i=1}^n(-1)^i[[[y_1,\dots,y_n],x_{\sigma(i)},z_1,\dots, z_{n-2}],x_{\sigma(1)},\dots, \widehat{x_{\sigma(i)}}, \dots, x_{\sigma(n)}],
\end{align*} where $\sigma \in \mathfrak{S}_n$. Thus we have the corresponding relations in $\overline{\textup{Lie}}_n(m)$.
\end{lem}
\begin{proof}
	For  $i \in \{1,\dots,n\}$ we know from relation (\ref{53}) that \[[[[y_1,\dots,y_n],x_i,z_1,\dots, z_{n-2}],x_1,\dots, \widehat{x_i}, \dots, x_n]\] is skew commutative in the $x_1, \dots ,x_{i-1}, x_{i+1}, x_n.$ Hence for every $\sigma \in \mathfrak{S}_n$ we have  \begin{align*}
	&\sum_{i=1}^n(-1)^i[[[y_1,\dots,y_n],x_{\sigma(i)},z_1,\dots, z_{n-2}],x_{\sigma(1)},\dots, \widehat{x_{\sigma(i)}}, \dots, x_{\sigma(n)}]\\&=\sum_{i=1}^n(-1)^i(-1)^{{inv(\sigma(1), \dots, \widehat{\sigma(i)}, \dots, \sigma(n))}}[[[y_1,\dots,y_n],x_{\sigma(i)},z_1,\dots, z_{n-2}],x_{1},\dots, \widehat{x_{\sigma(i)}}, \dots, x_{n}]\\&\overset{(\ref{inv})}{=}
	\sum_{i=1}^n(-1)^{\sigma (i)}\sign(\sigma)[[[y_1,\dots,y_n],x_{\sigma(i)},z_1,\dots, z_{n-2}],x_{1},\dots, \widehat{x_{\sigma(i)}}, \dots, x_{n}].
		\end{align*} 

The right hand side is equal to 
\begin{align*}
	&\sign(\sigma)\sum_{i=1}^n(-1)^{\sigma (i)}[[[y_1,\dots,y_n],x_{\sigma(i)},z_1,\dots, z_{n-2}],x_{1},\dots, \widehat{x_{\sigma(i)}}, \dots, x_{n}]\\&=\sign(\sigma)\sum_{i=1}^n(-1)^{i}[[[y_1,\dots,y_n],x_{i},z_1,\dots, z_{n-2}],x_{1},\dots, \widehat{x_{i}}, \dots, x_{n}],
\end{align*}
where in the last equality we have a rearrangement of the terms in the sum. The first result of the lemma follows. The second follows from the first since $\overline{\textup{Lie}}_n(m)$ is a quotient of W(1). \end{proof} The main result of the present subsection is the following.
\begin{lem}\label{presentationlanke} The inclusion map $W(1) \subseteq W$ induces an isomorphism of $\mathfrak{S}_m$-modules \[\overline{\textup{Lie}}_n(m) \to \textup{Lie}_n(m).\]
\end{lem}
\begin{proof}  From Lemma \ref{liebarsur} we know that the inclusion map $W(1) \subseteq W$ induces a surjective linear map  $ \overline{\text{Lie}}_n(m) \to \text{Lie}_n(m).$ 
	
	In the argument given below, we show that there exists a surjective linear map $\text{Lie}_n(m) \to \overline{\text{Lie}}_n(m)$. Since these spaces are finite dimensional, the surjective linear map $\overline{\text{Lie}}_n(m) \to \text{Lie}_n(m)$  is an isomorphism as desired.
	
	Consider the map \[ \Theta:W \to \overline{\text{Lie}}_n(m)\]  \begin{itemize}
		\item that is the identity map on (G1), and
		\item sends a generator $[[{x_1},\dots ,{x_n} ],[{y_{1}},\dots, y_{n}], z_{1},\dots ,z_{n-2}  ]$ of type (G2) to 
		\begin{equation}\label{ThetaG2}\sum_{i=1}^{n}(-1)^{i}[[[y_1,\dots,y_n],x_i , z_1,\dots, z_{n-2}],x_1,\dots,\widehat{x_{i}},\dots, x_n].\end{equation}
	\end{itemize}
	We need to show that $\Theta$ is well defined, i.e. sends the relations  of  $W$ to relations of $\overline{\text{Lie}}_n(m)$ .
	
	Since $\Theta$ is the identity map on (G1), it is clear that the relations (\ref{51}) - (\ref{53})  are sent to the corresponding relations of $\overline{\text{Lie}}_n(m)$.
	
	Next we consider the relations (\ref{54}) - (\ref{57}) of $W$. From the definition of $\Theta$, it follows that the image of the relation (\ref{54}) is equal to 
	\begin{align*}&\sum_{i=1}^n(-1)^i[[[y_1,\dots,y_n],x_i,z_1,\dots, z_{n-2}],x_1,\dots, \widehat{x_i}, \dots, x_n]\\&-\sign(\sigma)\sum_{i=1}^n(-1)^i[[[y_1,\dots,y_n],x_{\sigma(i)},z_1,\dots, z_{n-2}],x_{\sigma(1)},\dots, \widehat{x_{\sigma(i)}}, \dots, x_{\sigma(n)}].
	\end{align*}
	By Lemma \ref{finalrel} this is equal to $0$.
	
		In order to show that relation (\ref{55}) of  $W$ is mapped under $\Theta$ to a relation of $\overline{\text{Lie}}_n(m)$, it suffices by the definition given in (\ref{ThetaG2}) to show the following identity in $\overline{\text{Lie}}_n(m)$
	\begin{align}\label{ThetaG21} &\sum_{i=1}^{n}(-1)^{i}[[[y_1,\dots,y_n],x_i , z_1,\dots, z_{n-2}],x_1,\dots,\widehat{x_i},\dots, x_n] \\\nonumber=&\sign(\sigma)\sum_{i=1}^{n}(-1)^{i}[[[y_{\sigma(1)},\dots,y_{\sigma(n)}],x_i , z_1,\dots, z_{n-2}],x_1,\dots,\widehat{x_i},\dots, x_n] \end{align}
	for any permutation $\sigma \in \mathfrak{S}_n$. From relation (\ref{51}) of $W(1)$ applied to   \[[[[y_1,\dots,y_n],x_i , z_1,\dots, z_{n-2}],x_1,\dots,\widehat{x_i},\dots, x_n] \]
	we have
	\begin{align*}&[[[y_1,\dots,y_n],x_i , z_1,\dots, z_{n-2}],x_1,\dots,\widehat{x_i},\dots, x_n] \\\nonumber=& \sign(\sigma) [[[y_{\sigma(1)},\dots,y_{\sigma(n)}],x_i , z_1,\dots, z_{n-2}],x_1,\dots,\widehat{x_i},\dots, x_n]. \end{align*}
	Taking the alternating sum with respect to $i=1,\dots, n$ we obtain eq. (\ref{ThetaG21}).
	
		Similarly to the previous case, in order to show that relation (\ref{56}) of  $W$ is mapped under $\Theta$ to a relation of $\overline{\text{Lie}}_n(m)$, it suffices to show the following identity in $\overline{\text{Lie}}_n(m)$
	\begin{align}\label{ThetaG22} &\sum_{i=1}^{n}(-1)^{i}[[[y_1,\dots,y_n],x_i , z_1,\dots, z_{n-2}],x_1,\dots,\widehat{x_i},\dots, x_n] \\\nonumber=& \sign(\tau)\sum_{i=1}^{n}(-1)^{i}[[[y_{1},\dots,y_{n}],x_i , z_{\tau(1)},\dots, z_{\tau(n-2)}],x_1,\dots,\widehat{x_i},\dots, x_n] \end{align}
	for any permutation $\tau \in \mathfrak{S}_{n-2}$. From relation (\ref{52}) of $W(1)$ applied to   \[[[[y_1,\dots,y_n],x_i , z_1,\dots, z_{n-2}],x_1,\dots,\widehat{x_i},\dots, x_n] \]
	we have
	\begin{align*}&[[[y_1,\dots,y_n],x_i , z_1,\dots, z_{n-2}],x_1,\dots,\widehat{x_i},\dots, x_n] \\\nonumber=&\sign(\tau)[[[y_{1},\dots,y_{n}],x_i , z_{\tau(1)},\dots, z_{\tau(n-2)}],x_1,\dots,\widehat{x_i},\dots, x_n], \end{align*}
	where the element $x_i$ remains fixed. Hence (\ref{ThetaG22}) follows.
	
	From the definition of $\Theta$ (see (\ref{ThetaG2})), it follows that the image of the relation (\ref{57}) is equal to the relation (R5) of $\overline{\text{Lie}}_n(m)$. 
	
	Thus far we have shown that $\Theta:W \to \overline{\text{Lie}}_n(m)$ is a well defined map. Since $\overline{\text{Lie}}_n(m)$ is a quotient of $W(1)$ and the (G1) generate $W(1)$, it is clear from the definition of $\Theta$ that $\Theta$ is surjective.
	
	Finally, from the definition of $\Theta$ it is clear that the image under $\Theta$
	\begin{itemize}
		\item of (R1) of Lemma \ref{genrel1} is equal to (R1) in $\overline{\textup{Lie}}_n(m)$,
		\item of (R2) of Lemma \ref{genrel1} is equal to (R4) in $\overline{\textup{Lie}}_n(m)$, and
		\item of (R3) of Lemma \ref{genrel1} is equal to zero.
	\end{itemize}
From Lemma \ref{genrel1} it follows that $\Theta$ induces a surjective linear map $\text{Lie}_n(m) \to \overline{\text{Lie}}_n(m)$. The proof is complete.

 	\end{proof}
\subsection{A presentation associated to the map $\gamma_1+\gamma_2 +\gamma_3$}
For the remainder of this section, let $\lambda=(n,n-1,n-1)$, $\mu=(n+1,n-1,n-2)$ and $\nu=(n,n,n-2)$.

Applying the functor $\Omega$ that we discussed in Section \ref{Omega} to the map $\gamma_1 +\gamma_2 + \gamma_3$ of Theorem \ref{main2}, we obtain a map of $G$-modules
\begin{equation}\label{omegaonmaps}
	\Lambda(\lambda) \oplus \Lambda(\lambda) \oplus  \Lambda(\nu) \xrightarrow{\Omega(\gamma_1) + \Omega(\gamma_2)+ \Omega(\gamma_3)} \Lambda(\lambda).
\end{equation}
From Definition \ref{mapg}, Definition \ref{mapg3}, the definition of the maps $\phi_S$ and $\psi_S$ in (\ref{phis}) and (\ref{psiS}), and the description of the functor $\Omega$ in Section \ref{Omega} (especially item 3 that involves sign changes), it is straightforward to verify that the maps $\Omega(\gamma_1)$, $\Omega(\gamma_2)$ and  $\Omega(\gamma_3)$ are as follows,
\begin{align*}
	\Omega(\gamma_1)&=1_{\Lambda(\lambda)}-\psi_{S(2)},\\
	\Omega(\gamma_2)&=1_{\Lambda(\lambda)}-\psi_{S(3)}+\psi_{S(4)},\\
	\Omega(\gamma_3)&=\psi_{Q(1)} + \psi_{Q(2)}.
\end{align*}

We will need to compute these maps on basis elements of $\Lambda(\lambda)$ (in the cases of $\Omega(\gamma_1)$ and $\Omega(\gamma_2)$) and on basis elements of $\Lambda(\nu)$ (in the case of $\Omega(\gamma_3)$). 

Recall that we have the basis $ \{ e_1, \dots, e_N \} $ of the natural $G$-module $V$. So let \[w=x \otimes y \otimes z \in \Lambda(\lambda),\] where $x=x_1 \cdots x_n \in \Lambda^{n}$, $y=y_1 \cdots y_{n-1} \in \Lambda^{n-1}$, $z \in \Lambda^{n-1}$ and $x_i, y_j \in \{e_1, \dots, e_N\}$. For $i \in \{1, \dots, n\}$ we let \[x[i] = x_1 \cdots \widehat{x_i} \cdots x_n \in \Lambda^{n-1},\] where  $\widehat{x_i}$ means that $x_i$ is omitted. For $j \in \{1, \dots, n-1\}$ we define $y[j] \in \Lambda ^{n-2}$ in a similar manner. It is easy to check using the definition of $\psi_S$ in Definition \ref{defpsiS} and the definition of the tableaux $S(i)$ in Definition \ref{mapg}, that 
\begin{align}\label{omegagamma}
	\Omega(\gamma_1)(w)&=w-\sum_{i=1}^n(-1)^{i-1}x_iy \otimes x[i] \otimes z,\\\label{omegaphi}
	\Omega(\gamma_2)(w)&=w-x\otimes z \otimes y+\sum_{i=1}^{n-1}(-1)^{i-1}\sum_{j=1}^n(-1)^{j-1}y_iz\otimes x_jy[i] \otimes x[j].
\end{align}
Likewise, for \[u=x' \otimes y' \otimes z' \in \Lambda(\nu),\] where $x'=x_1 \cdots x_n \in \Lambda ^{n}$, $y'= y_1 \cdots y_n \in \Lambda^ {n}$, $z' \in \Lambda^{n-2}$  and $x_i, y_i \in \{e_1, \dots, e_N\}$, we have \begin{equation}\label{omegagamma3}
\Omega(\gamma_3)(u)=\sum_{i=1}^{n} x' \otimes y_i z' \otimes y'[i] + \sum_{i=1}^{n} y' \otimes x_i z' \otimes x'[i].
\end{equation}

Next we want to apply the Schur functor. Suppose $N \ge m$. Recall from \cite{Gr}, the Schur functor is a functor $f$ from the category of homogeneous polynomial representations of  $G$ of degree $m$ to the category of left $\mathfrak{S}_m$-modules. For $M$ an object in the first category,  $f(M)$ is the weight subspace $M_\alpha$ of $M$, where $\alpha = (1^m, 0^{N-m})$ and for $\theta :M \to Q$ a morphism in the first category, $f(\theta)$  is the restriction $M_\alpha \to Q_\alpha$ of $\theta$. Let us denote the conjugate of a partition $\xi$ by $\xi'$. The Specht module corresponding to a partition $\xi$ of $m$ will be denoted by $S^{\xi}$ and the space of column tabloids corresponding to $\xi$ will be denoted by ${\tilde{M}}^\xi$. See \cite[Chapter 7.4]{F} for the later. It is well known that $f$ is an exact functor such that $f(L_{\xi})=S^{\xi'}$ and $f(\Lambda^{\xi}) = \tilde{M}^{\xi'}$ for any partition $\xi$ of $m$, where $L_{\xi}$ is the Schur module introduced in Section 2.2.

Applying the Schur functor to the map $\Lambda(\lambda) \oplus \Lambda(\lambda) \oplus  \Lambda(\nu) \xrightarrow{\Omega(\gamma_1) + \Omega(\gamma_2)+ \Omega(\gamma_3)} \Lambda(\lambda)$ in (\ref{omegaonmaps}) we obtain a map of $\mathfrak{S}_m$-modules \begin{equation}\label{cokomega} \tilde{M}^{\lambda'} \oplus \tilde{M}^{\lambda'}  \oplus  \tilde{M}^{\nu'}\xrightarrow{f(\Omega(\gamma_1)) + f(\Omega(\gamma_2)) + f(\Omega(\gamma_3))} \tilde{M}^{\lambda'}.\end{equation} If an element $x\otimes y \otimes z$ of $\Lambda(\lambda)$ is in $\tilde{M}^{\lambda'}$, we denote by $x|y|z$ its image in the cokernel of (\ref{cokomega}).
\begin{lem}\label{pres1} The cokernel of the map (\ref{cokomega}) has a presentation with generators $x|y|z$ and relations 
	\begin{align}\label{45}&x|y|z=\sum_{i=1}^n(-1)^{i-1}x_iy | x[i] | z,\\\label{46}
		&x|y|z=x | z | y-\sum_{i=1}^{n-1}(-1)^{i-1}\sum_{j=1}^n(-1)^{j-1}y_iz | x_jy[i] | x[j],\\\label{47}&\sum_{i=1}^{n} x' | y_i z' | y'[i] + \sum_{i=1}^{n} y' | x_i z' | x'[i] =0,\end{align}
	where \begin{itemize} \item $x \in \Lambda^n$ and $y,z \in \Lambda^{n-1}$ run over all elements of the form $x=x_{1} \cdots x_{n}$, $y=y_{1} \cdots y_{{n-1}}$, $z=z_{1} \cdots z_{n-1}$ such that $x_1, \dots x_{n}, y_1, \dots, y_{n-1}, z_1, \dots, z_{n-1}$ is permutation of $e_1,e_2, \dots, e_{3n-2},$ and
		\item $x', y' \in \Lambda^n$ and $z' \in \Lambda^{n-2}$ run over all elements of the form $x'=x_{1} \cdots x_{n}$, $y'=y_{1} \cdots y_{{n}}$, $z'=z_{1} \cdots z_{n-2}$ such that $x_1, \dots x_{n}, y_1, \dots, y_{n}, z_1, \dots, z_{n-2}$ is permutation of $e_1,e_2, \dots, e_{3n-2}.$
		\end{itemize}
\end{lem}
\begin{proof}
	This follows from the previous discussion and the equalities (\ref{omegagamma}), (\ref{omegaphi}), (\ref{omegagamma3}).
\end{proof}
\subsection{Proof of Theorem \ref{mainspecht}} \begin{proof}Recall that we are assuming $N \ge m=3n-2$ and $n \ge 2$. Also we have the partitions $\lambda=(n,n-1,n-1)$, $\mu=(n+1,n-1,n-2)$ and $\nu=(n,n,n-2)$.

Consider the map of $\mathfrak{S}_m$-modules
\begin{align*}h:\tilde{M}^{\lambda'} &\to \overline{\text{Lie}}_n(m), \\ x_1 \cdots x_n|y_1 \cdots y_{n-1}|z_1 \cdots z_{n-1} &\mapsto [[[x_1,\dots,x_n],y_1,\dots,y_{n-1}],z_1, \dots, z_{n-1}]. \end{align*}
From Lemma \ref{pres1} it follows that $h$ induces a map of $\mathfrak{S}_m$-modules \begin{equation}\label{lieiso} \coker(\tilde{M}^{\lambda'} \oplus \tilde{M}^{\lambda'} \oplus \tilde{M}^{\nu'} \xrightarrow{f(\Omega(\gamma_1)) + f(\Omega(\gamma_2))+ f(\Omega(\gamma_3))} \tilde{M}^{\lambda'}) \to \overline{\text{Lie}}_n(m) \end{equation}
which is an isomorphism, since $h$ carries (\ref{45}), (\ref{46}) and (\ref{47}) to (R1), (R4) and (R5) respectively. 
Hence, from Theorem \ref{main2} it follows that $\overline{\text{Lie}}_n(m) \simeq S^{\lambda'} \oplus S^{\mu'}$ as $\mathfrak{S}_m$-modules. From Lemma \ref{presentationlanke} we have $\text{Lie}_n(m) \simeq S^{\lambda'} \oplus S^{\mu'}$ as $\mathfrak{S}_m$-modules as desired.\end{proof}

We have seen that the map in (\ref{lieiso}) is an isomorphism of $\mathfrak{S}_m$-modules.  From this and Remark \ref{rem46} it follows that we  have another isomorphism of $\mathfrak{S}_m$-modules, 
	\begin{equation}\label{lieiso2} \coker(\tilde{M}^{\lambda'} \oplus \tilde{M}^{\lambda'} \xrightarrow{f(\Omega(\gamma_1)) + f(\Omega(\gamma_2))} \tilde{M}^{\lambda'}) \to \text{Lie}_n(m). \end{equation}
	Thus we obtain the following corollary.

\begin{cor} For $m=3n-2$ and $n \ge 2$ we have $\textup{Lie}_{n}(m) \simeq W(1)/\spn(R1, R4)$ as $\mathfrak{S_m}$-modules.
	\end{cor}

\section*{Acknowledgments}
We are  very grateful to the anonymous referees for many detailed and constructive  comments and suggestions that helped to greatly improve the paper and, in particular, for pointing out a gap in the proof of a previous form of Lemma 5.10 in an earlier version of the paper. 

The research project is implemented in the framework of H.F.R.I. Call “Basic research Financing Horizontal support of all Sciences)” under the National Recovery and Resilience Plan “Greece 2.0” funded by the European Union Next Generation EU, H.F.R.I.  
Project Number: 14907.

% ------------------------------------------------------------------------
\end{document}